\newcommand{\change}[1]{#1}
\newtheorem{theorem}{Theorem}
\newtheorem{proposition}[theorem]{Proposition}
\newtheorem{lemma}[theorem]{Lemma}
\newtheorem{corollary}[theorem]{Corollary}
\newtheorem*{theorem*}{Theorem}
\newtheorem*{theorem1}{\change{Theorem \ref{thm:main}}}
\newtheorem*{theorem2}{Theorem \ref{thm:main2} (restated)}
\newtheorem*{theorem3}{\change{Theorem \ref{thm:negwkl}}}
\newtheorem*{theorem4}{\change{Theorem \ref{thm:from-tanaka}}}
\newtheorem*{Tanaka}{\change{Tanaka's self-embedding theorem for $\wklstar$}}
\theoremstyle{definition}
\newtheorem{definition}[theorem]{Definition}
\theoremstyle{remark}
\newtheorem*{ack}{Acknowledgement}
\newtheorem*{remark}{Remark}
\newtheorem{question}{Question}
\newcommand{\tuple}[1]{\langle #1 \rangle}
\newcommand{\bbn}{\mathbb{N}}
\newcommand{\bba}{\mathbb{A}}
\newcommand{\supexp}{\mathrm{superexp}}
\newcommand{\Log}{\mathrm{Log}}
\newcommand{\rca}{\mathsf{RCA}_0}
\newcommand{\rcastar}{\mathsf{RCA}^*_0}
\newcommand{\wkl}{\mathsf{WKL}_0}
\newcommand{\wklstar}{\mathsf{WKL}^*_0}
\newcommand{\wklrm}{\mathrm{WKL}}
\newcommand{\m}{{\mathcal M}}
\newcommand{\X}{{\mathcal X}}
\newcommand{\Y}{{\mathcal Y}}
\renewcommand{\P}{{\mathcal P}}
\DeclareMathOperator\lh{lh}
\newcommand{\as}[2]{\forall #1 \! < \! #2 \,}
\def\tuple#1{\langle#1\rangle}
\title{Categorical characterizations of the natural numbers require primitive recursion}
\author{Leszek Aleksander Ko{\l}odziejczyk\footnote{Institute of Mathematics, 
University of Warsaw, Banacha 2, 02-097 Warszawa, Poland, \texttt{lak@mimuw.edu.pl}.
Supported in part
by Polish National Science Centre grant no.~2013/09/B/ST1/04390.} 
\and
Keita Yokoyama\footnote{School of Information Science, Japan Advanced Institute of Science and
Technology, Nomi, Ishikawa, Japan,
\texttt{y-keita@jaist.ac.jp}.
Supported in part
by JSPS Grant-in-Aid for Research Activity Start-up grant no.~25887026.}
}
\begin{document}

\maketitle
\bibliographystyle{amsalpha}

\begin{abstract}
Simpson and Yokoyama [Ann.\ Pure Appl.\ Logic 164 \change{(2013)},  284--293] asked whether there exists a characterization
of the natural numbers by a second-order sentence which is provably categorical in the theory $\rcastar$. We answer in the negative, showing that for any characterization of the natural numbers which is provably true in $\wklstar$, the categoricity theorem implies $\Sigma^0_1$ induction. 

On the other hand, we show that $\rcastar$ does make it possible to characterize the natural
numbers categorically by means of a set of second-order sentences. We also show that a certain $\Pi^1_2$-conservative
extension of $\rcastar$ admits a provably categorical single-sentence characterization of the naturals, but each such 
characterization has to be inconsistent with $\wklstar + \supexp$.  
\end{abstract}

Inspired by a question of V\"a\"an\"anen (see e.g.\ \cite{vaananen:sol} for some related work), Simpson and the second author \change{\cite{sy:peanocat}} studied various second-order characterizations of $\tuple{\bbn, S, 0}$,
with the aim of determining the reverse-mathematical strength of 
their respective categoricity theorems. One of the general conclusions
is that the strength of a categoricity theorem depends
heavily on the characterization. Strikingly, however, each of the categoricity theorems considered in \change{\cite{sy:peanocat}}
 implies $\rca$, even over the much weaker base theory $\rcastar$, that is, $\rca$ with $\Sigma^0_1$ induction 
replaced by $\Delta^0_0$ induction in the language with exponentiation.
{(For $\rcastar$, see \cite{simpson-smith}.)}

This leads to the following question.
\change{
\begin{question}\cite[Question 5.3, slightly rephrased]{sy:peanocat}\label{q:sy}
Does $\rcastar$ prove the existence of a second-order sentence or set of sentences $T$ such that $\tuple{\bbn, S, 0}$
is a model of $T$ and all models of $T$ are isomorphic to $\tuple{\bbn, S, 0}$? One may also
consider the same question with $\rcastar$ replaced by $\Pi^0_2$-conservative extensions of $\rcastar$.
\end{question}
}

\change{Naturally, to have any hope of characterizing infinite structures categorically, second-order logic 
has to be interpreted according to the \emph{standard} 
semantics (sometimes also known as {strong} or {Tarskian} semantics), as opposed to the \emph{general} (or {Henkin}) semantics. In other words, a second-order quantifier $\forall X$ really means  
``for \emph{all} subsets of the universe''
(or, as we would say in a set-theoretic context, ``for all elements of the power set of the universe'').} 

\change{Question \ref{q:sy}} 
admits multiple versions depending on whether we focus on $\rcastar$ 
or consider other $\Pi^0_2$-equivalent theories and whether we want the characterizations 
of the natural numbers to be sentences or sets of sentences. 
The most basic version, restricted to $\rcastar$ and single-sentence characterizations, 
would read as follows:

\begin{question}\label{q:basic}
Does there exist a second-order sentence $\psi$ in the language with one unary function $f$ and one
constant $c$ such that $\rcastar$ proves: (i) $\tuple{\bbn, S, 0}~\models~\psi$, and (ii) for every $\tuple{A, f, c}$, 
if $\tuple{A, f, c} \models \psi$, then there exists an isomorphism between $\tuple{\bbn, S, 0}$ and $\tuple{A, f, c}$?
\end{question}

We answer Question \ref{q:basic}  in the negative. In fact, characterizing $\tuple{\bbn, S, 0}$ not only up to isomorphism, but 
even just up to \emph{equicardinality of the universe}, requires the full strength of $\rca$. More precisely:

\begin{theorem}\label{thm:main} Let $\psi$ be a second-order sentence in the language with one unary function $f$ 
and one individual constant $c$. If $\wklstar$ proves that $\tuple{\bbn, S, 0} \models \psi$, then over $\rcastar$ the statement 
``for every $\tuple{A, f, c}$, if $\tuple{A, f, c} \models \psi$, then there exists a bijection between 
$\bbn$ and $A$'' implies $\rca$.
\end{theorem}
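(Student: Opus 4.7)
The plan is to prove the contrapositive. Fix a countable $\M = (M, \X) \models \rcastar$ in which $\Sigma^0_1$-induction fails; since any model of $\rcastar$ with standard first-order part automatically satisfies $\Sigma^0_1$-induction, $M$ must be nonstandard, and we have a proper $\Sigma^0_1$-definable cut $I \subsetneq M$. The goal is to produce a structure $\tuple{A, f, c}$ with $A \in \X$ satisfying $\psi$ in $\M$'s semantics but for which no bijection with $\bbn^{\M}$ lies in $\X$.

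The first step is to bring in $\wklstar$: by a standard conservativity argument, expand $\M$ to a countable $\M_1 = (M, \X_1) \models \wklstar$ with $\X \subseteq \X_1$. The hypothesis on $\psi$ then gives $\M_1 \models \tuple{\bbn, S, 0} \models \psi$. Next I would apply Tanaka's self-embedding theorem for $\wklstar$ to the countable nonstandard model $\M_1$, obtaining a proper initial segment $M' \subsetneq M$ together with an isomorphism $j$ between $\M_1$ and the restricted structure $(M', \X')$, where $\X' = \{X \cap M' : X \in \X_1\}$. In particular $(M', \X') \models \wklstar$, so $(M', \X') \models \tuple{M', S, 0} \models \psi$.

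The heart of the argument is then to realize the $\psi$-structure on the cut $M'$ as a genuine member of $\X$. I would code $(M', S, 0)$ as a structure $\tuple{A, f, c}$ in $\X$ using $\Delta^0_0$-comprehension together with the $\Sigma^0_1$-definition of $I$ (and possibly an additional use of exponentiation to rescale), and verify that $\psi$ continues to hold in $\M$'s semantics, using the fact that $\psi$'s second-order quantifiers can be reinterpreted over $\X$ rather than $\X_1$. Non-equipollence is then forced by the observation that a bijection $g \in \X$ between $\bbn^{\M}$ and $A$ would, through the coding, give a characteristic function for the cut $I$ (or a closely related proper cut) as a member of $\X$, contradicting the fact that $\rcastar$'s $\Delta^0_0$-induction rules out proper cuts being sets.

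The main obstacle I expect lies in the coding step: ensuring that the structure on $M'$, originally living only with the $\X_1$-sets available, can be faithfully realized inside $\X$, and that both the truth of $\psi$ and the obstruction to any bijection survive the transfer from $\M_1$ back to $\M$. I expect this to be managed by careful use of the $\Sigma^0_1$-definability of $I$ together with $\rcastar$'s bounded comprehension and totality of exponentiation, which should make the coded structure definable at a level low enough to live inside $\X$ while the enumeration that would witness equipollence fails to be so definable.
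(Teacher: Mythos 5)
There is a genuine gap, and it sits exactly where you flag ``the main obstacle'': the cut $M'$ produced by Tanaka's self-embedding theorem is not $\Sigma^0_1$-definable, so there is no way to realize $\tuple{M', S, 0}$ as a structure $\tuple{A,f,c}$ with $A \in \X$. A proper cut is never itself an element of $\X$ (bounded induction fails on it), so the only route to an honest set $A\in\X$ carrying the desired successor structure is to start from a $\Sigma^0_1$-\emph{definable} proper cut $J$ and take $A$ to be the set of pairs $\tuple{x,y_x}$ with $y_x$ the least witness that $x\in J$; this is the paper's $\bba(J)$ construction (Lemma \ref{lem:system-cut}). You cannot simply fall back on the definable cut $I$ witnessing the failure of $\Sigma^0_1$ induction either: for the argument to close, the induced second-order structure on the cut must be a model of $\wklstar$ (so that it is forced to satisfy $\psi$), and by the Simpson--Smith theorem quoted in the preliminaries this requires the cut to be closed under $\exp$, which $I$ need not be. The paper's Lemma \ref{lem:cuts} exists precisely to manufacture, from $\neg\mathrm{I}\Sigma_1$, a proper $\Sigma_1$-definable cut that \emph{is} closed under $\exp$ (via a case analysis on $\supexp$ and $\Log^*$); nothing in your outline plays this role. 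Tanaka's theorem is the engine of the paper's Theorem \ref{thm:from-tanaka}, not of Theorem \ref{thm:main}, and in its $\wklstar$ form it also needs recursive saturation, which your $\tuple{M,\X_1}$ has not been arranged to have.

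A second substantive gap is the transfer step ``verify that $\psi$ continues to hold in $\m$'s semantics, reinterpreting the second-order quantifiers over $\X$ rather than $\X_1$.'' Since $\psi$ is an arbitrary second-order sentence, its truth value is not monotone in the family of available sets, so this is not automatic in either direction. What is actually needed is that the $\X$-subsets of $A$ coincide exactly with the traces $Z\cap J$ for $Z\in\X$, i.e.\ with the second-order part of the cut model; establishing this identification is itself a nontrivial argument involving an extension of $\X$ to a $\wklstar$ family, an infinite tree coding the characteristic function of a given subset of $A$, and truncation of a branch to a finite set (part (c) of Lemma \ref{lem:system-cut}). Your outline assumes this identification rather than proving it. The final step (no bijection between $A$ and $\bbn$ because one would collapse the proper cut) is essentially right and matches the paper's internal-cardinality argument.
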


Since $\rca$ is equivalent over $\rcastar$ to a statement expressing the correctness of defining functions by primitive recursion
{\cite[Lemma~2.5]{simpson-smith}}, Theorem \ref{thm:main} may be intuitively understood as saying that, for provably true 
single-sentence characterizations at least, ``categorical characterizations of the natural numbers require primitive recursion''. 

Do less stringent versions of Question \ref{q:sy} give rise to ``exceptions'' to this general conclusion? As it turns out, they do. Firstly, characterizing the natural numbers by a \emph{set} of sentences is already possible in $\rcastar$, \change{in the following sense (for a precise statement of the theorem, see Section 4)}:

\begin{theorem}\label{thm:main2}
There exists a $\Delta_0$-definable (and polynomial-time recognizable)
set $\Xi$ of $\Sigma^1_1 \wedge \Pi^1_1$ sentences such that $\rcastar$ proves: for every $\tuple{A, f, c}$, 
$\tuple{A, f, c}$ satisfies all $\xi \in \Xi$ if and only if \change{$\tuple{A, f, c}$} is isomorphic to $\tuple{\bbn, S, 0}$.
\end{theorem}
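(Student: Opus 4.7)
The plan is to take $\Xi$ to consist of the three standard second-order Peano axioms---injectivity of $f$, $c \notin \mathrm{range}(f)$, and the second-order induction axiom $\forall X(c \in X \wedge \forall y(y \in X \to f(y) \in X) \to \forall y\,y \in X)$---together with, for each $n \in \bbn$, a $\Sigma^1_1$ sentence $\alpha_n$ asserting the existence of the $n$-th iterate of $f$ as a function on $A$:
\[
\alpha_n := \exists g\,\bigl(g \colon A \to A,\ g(c) = \underbrace{f(f(\cdots f(c)\cdots))}_{n\text{-fold}},\ \text{and}\ g \circ f = f \circ g\bigr).
\]
Each $\alpha_n$ is $\Sigma^1_1$ and hence (trivially) in $\Sigma^1_1 \wedge \Pi^1_1$. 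Since the syntactic form of $\alpha_n$ depends uniformly on $n$, the set $\Xi$ is $\Delta_0$-definable and polynomial-time recognizable.

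For the direction $\tuple{A, f, c} \cong \tuple{\bbn, S, 0} \Rightarrow \tuple{A, f, c} \models \Xi$, I verify each axiom in $\tuple{\bbn, S, 0}$ provably in $\rcastar$. The Peano axioms follow from standard facts, with second-order induction derivable from $\Delta^0_0$-induction with set parameters. Each $\alpha_n$ is witnessed by the function $g(x) = x + n$, whose graph is $\Delta^0_0$-definable for fixed external $n$ and therefore belongs to $\X$ by $\Delta^0_1$-comprehension.

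For the converse, given $\tuple{A, f, c} \models \Xi$, I construct a bijection $\pi \colon \bbn \to A$ realizing the isomorphism by setting $\pi(n) := g_n(c)$, where $g_n$ is any witness of $\alpha_n$. Well-definedness follows from external finite induction on $n$ using injectivity of $f$, injectivity of $\pi$ from the Peano axioms, and surjectivity from applying the second-order induction axiom to the range of $\pi$---provided this range can be exhibited as a set in the ambient second-order universe $\X$.

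The principal obstacle is precisely this final step: producing the range of $\pi$ as an element of $\X$ would ordinarily require primitive recursion, which $\rcastar$ does not support. To address this, I plan to enlarge $\Xi$ with one further $\Sigma^1_1 \wedge \Pi^1_1$ axiom whose $\Sigma^1_1$ part asserts the existence of a uniform iteration operator $G \colon A \times A \to A$ satisfying $G(c, b) = b$ and $G(f(a), b) = f(G(a, b))$, and whose $\Pi^1_1$ part---via a second-order induction argument---forces $G$ to be unique. Ordinary addition on $\bbn$ supplies the witness for $\tuple{\bbn, S, 0}$, so this axiom is provably satisfied in $\rcastar$. Using $G$ as a set parameter together with the $\alpha_n$, the range of $\pi$ should become $\Delta^0_1$-definable and hence available through $\Delta^0_1$-comprehension, after which Peano induction closes the argument. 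The delicate balance will be to ensure that the auxiliary axioms provide enough witnessing material to render $\pi$ constructible while still being provably true of $\tuple{\bbn, S, 0}$ in $\rcastar$; achieving this balance will be the main technical step.
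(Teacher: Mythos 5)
Your plan handles the ``too short'' direction correctly: the sentences $\alpha_n$ (like the first-order sentences $\exists a_0 \ldots \exists a_x\,[\,a_0 = c \wedge \ldots \wedge a_x = f(a_{x-1})\,]$ that the paper uses for the same purpose) force $f^x(c)$ to exist for every $x\in\bbn$, and this, together with $\Delta^0_0$ induction, does yield a well-defined injection $n \mapsto f^n(c)$ whose graph is $\Delta^0_1$ and hence lies in $\X$. The genuine gap is the step you yourself flag as ``the main technical step'': surjectivity, i.e.\ ruling out Peano systems that are ``too long''. The set $R$ of elements reachable from $c$ is only $\Sigma^0_1$-definable (witnessed by a finite iteration sequence), its complement is only $\Pi^0_1$, and there is no reason for $R$ to belong to $\X$; so the second-order induction axiom, which in a model of $\rcastar$ only quantifies over sets in $\X$, simply cannot be applied to $R$. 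Your proposed repair via a uniform iteration operator $G$ does not close this gap. First, the uniqueness half of that axiom is automatic for \emph{any} Peano system (the agreement set of two candidate operators is $\Delta^0_0$-definable from them, contains $c$, and is closed under $f$), so it adds no strength. Second, the existence half does not exclude unreachable elements: on a system containing an unreachable $f$-chain one can set $G(a,b)=a$ for unreachable $a$, which satisfies $G(f(a),b)=f(G(a,b))$, so $G$ exists there too; and in general $G$ provides no $\Pi^0_1$ definition of reachability (e.g.\ $\exists b\,G(b,c)=a$ is satisfied by $b=a$ for such a $G$). So the range of $\pi$ does not become $\Delta^0_1$, and the argument stalls exactly where you predicted it might.

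The paper's $\Xi$ instead consists of the iterate-existence sentences together with the single $\Sigma^1_1\wedge\Pi^1_1$ sentence $\xi$ of Lemma~\ref{lem:almost-cat}, whose extra conjuncts beyond the Peano axioms are precisely the device you are missing: a $\Sigma^1_1$ conjunct asserting the existence of a discrete linear order $\preccurlyeq$ with least element $c$ and successor $f$, and a $\Pi^1_1$ conjunct asserting that for every such order every initial segment $[c,a]_\preccurlyeq$ is Dedekind-finite. Under the standard semantics, $\rcastar$ proves that Dedekind-finite subsets are finite (an infinite set admits a $\Delta^0_1$-definable bijection with a proper subset of itself), and then a counting argument by $\Delta^0_0(\exp)$-induction along a sequence of length $\card([c,a]_\preccurlyeq)$ shows that every $a$ equals $f^w(c)$ for some $w$. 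If you want to salvage your approach, you should replace the $G$-axiom by something of this kind; as it stands, the converse implication ``$\bba\models\Xi \Rightarrow \bba\cong\bbn$'' is not established.
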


Secondly, even a single-sentence characterization is possible in a $\Pi^1_2$-conservative extension of $\rcastar$, at least if one is willing to consider rather peculiar theories:

\begin{theorem}\label{thm:negwkl}
There is a $\Sigma^1_2$ sentence which is a categorical characterization of $\tuple{\bbn, S, 0}$ provably in $\rcastar + \neg \wklrm$.
\end{theorem}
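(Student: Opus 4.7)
The plan is to exhibit a $\Sigma^1_2$ sentence $\psi$ in the language $\{f, c\}$, of the shape $\exists X\, \varphi(X, f, c)$ with $\varphi$ essentially $\Pi^1_1$, that conjoins Dedekind's second-order characterization of the naturals with an internal assertion licensed by $\neg \wklrm$. Concretely, I would take $\psi$ to be the conjunction of (i) the Dedekind axioms---$f$ injective, $c \notin f(A)$, and $\forall Y \, (c \in Y \wedge Y \text{ closed under } f \to Y = A)$---with (ii) the existence of a set $X$ that simultaneously codes an infinite binary tree over $A$ with no $A$-indexed path and supplies a coupling to $(f, c)$ rigidifying $A$'s successor structure. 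Together these clauses form a $\Sigma^1_2$ sentence.

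To verify $\rcastar + \neg\wklrm \vdash \tuple{\bbn, S, 0} \models \psi$, Dedekind is proved by the usual $\Delta_0$-induction argument available in $\rcastar$ (every non-empty set $\bbn\setminus X$ would have a least element, forced to be a successor, giving a contradiction), while the existential clause is witnessed by (a suitable packaging of) the infinite binary tree with no path provided by $\neg\wklrm$.

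For categoricity, assume $\tuple{A, f, c} \models \psi$; the target is a bijection $g : \bbn \to A$ with $g(0) = c$ and $g(n+1) = f(g(n))$, as a set in our ambient model. In $\rcastar$, primitive recursion is not available to define $g$ directly, so one must use the witnessing $X$ together with the ambient tree $T$. The plan is to argue contrapositively: if no such $g$ existed, then $A$ would embed as a proper cut of $\bbn$ (or as an infinite compressed-successor subset not $\mathcal X$-enumerable), and then the ambient $T$ could be folded into $A$'s internal arithmetic so as to produce a path through $X$ inside $A$, contradicting the ``no $A$-indexed path'' clause of $\psi$.

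The technical heart --- and the main obstacle --- is designing the coupling in $X$ so that the path-construction really does trigger whenever $A \not\cong \bbn$. Proper cuts $I \subsetneq \bbn$ can host their own internal trees without internal paths, so bare internal $\neg\wklrm$ is not by itself enough; moreover the naive restriction $T \cap 2^{<I}$ already acquires a path in $I$ from any node of $T$ of length above $I$, so the coupling has to track strictly more combinatorial information than just ``tree with no path''. Once the coupling is correctly set up, inconsistency of $\psi$ with $\wklstar + \supexp$ is automatic: $\wklstar$ proves that $\tuple{\bbn, S, 0}$ internally satisfies $\wklrm$, directly contradicting the internal $\neg\wklrm$ clause of $\psi$.
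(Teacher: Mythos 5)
Your overall strategy --- augmenting the Dedekind axioms with an internal assertion in the spirit of $\neg\wklrm$ --- is the right one, but the proof is incomplete exactly where you say it is, and the ``main obstacle'' you identify rests on a misconception. You write that proper cuts can host their own internal trees without internal paths, so that some extra ``coupling'' must be engineered to make the contradiction trigger. In fact the opposite is true, and it is the key lemma that makes the whole argument work: if $\tuple{M,\X}\models\rcastar$ and $J$ is a \emph{proper} cut of $M$ closed under $\exp$, then $\tuple{J,\X_J}\models\wklstar$ (Simpson--Smith, Theorem 4.8; see Corollary \ref{cor:system-wkl}). Since the second-order quantifiers of your $\psi$, evaluated at a Peano system $\bba$ almost isomorphic to $\bbn$, range exactly over $\X\cap\P(A)$, which is isomorphic to $\X_{J(\bba)}$ by Lemma \ref{lem:system-cut}(c), the clause ``there is an infinite binary tree over $A$ with no $A$-indexed path'' is automatically \emph{false} whenever $J(\bba)$ is a proper cut closed under $\exp$. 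No coupling to the ambient tree is needed; your observation that $T\cap 2^{<I}$ acquires a path in $I$ from any node of length above $I$ is essentially the proof of this fact, not an obstacle to be designed around. Leaving the coupling unconstructed therefore leaves the categoricity direction unproved.

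Two further ingredients are missing and genuinely needed. First, to express ``infinite binary tree with no path'' inside the abstract structure $\tuple{A,f,c}$ at all, and to guarantee that $J(\bba)$ is closed under $\exp$ (without which the Simpson--Smith theorem does not apply and proper cuts are not excluded), one must existentially quantify over operations $\oplus$, $\otimes$ and an ordering $\preccurlyeq$ satisfying the recursion equations together with $\mathrm{I}\Delta_0+\exp$ (which is finitely axiomatizable), and only then assert $\neg\wklrm$ of the resulting internal arithmetic; this is precisely what makes the extra conjunct $\Sigma^1_2$. Second, the Dedekind axioms alone do not rule out ``too long'' models: the set $\{a\in A:\exists x\, f^x(c)=a\}$ is only $\Sigma^0_1$-definable and need not belong to $\X$, so the second-order induction axiom (whose set quantifier ranges over $\X\cap\P(A)$) does not force every element to have the form $f^x(c)$. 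You must also include the $\Pi^1_1$ clause of Lemma \ref{lem:almost-cat} (every $\preccurlyeq$-initial segment is Dedekind-finite) so as to reduce to Peano systems almost isomorphic to $\bbn$ before the cut analysis can begin.
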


Theorem \ref{thm:negwkl} is not quite satisfactory, as the theory and characterization it speaks of are false in $\tuple{\omega, \P(\omega)}$. So, another natural question to ask is whether a single-sentence characterization of the natural numbers can be provably categorical in a \emph{true} $\Pi^0_2$-conservative extension of $\rcastar$. We show that under an assumption just a little stronger than $\Pi^0_2$-conservativity, the characterization from Theorem \ref{thm:negwkl} is actually ``as true as possible'':

\begin{theorem}\label{thm:from-tanaka}
Let $T$ be an  extension of $\rcastar$ conservative for first-order $\forall \Delta_0(\Sigma_1)$ sentences. Let $\eta$ be a second-order sentence consistent with $\wklstar + \supexp$. Then it is not the case that $\eta$ is a categorical characterization of $\tuple{\bbn, S, 0}$ provably in $T$.
\end{theorem}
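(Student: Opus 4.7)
Suppose for contradiction that $\eta$ is consistent with $\wklstar + \supexp$ and $T$ proves $\eta$ to be a categorical characterization of $\tuple{\bbn, S, 0}$. The plan is to build a countable model $\n \models T$ in which $\eta$ has two non-isomorphic second-order models, contradicting the provable categoricity.

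Fix a countable nonstandard $\m = \tuple{M, \X} \models \wklstar + \supexp + \eta$. Apply Tanaka's self-embedding theorem for $\wklstar$ together with the ``cut closed under $\exp$'' construction recalled in the excerpt: this yields a proper end-extension cut $I \subsetneq M$, closed under $\exp$, and an isomorphism $\phi \colon \m \to \tuple{I, \X_I}$, with $\X_I \subseteq \P(I)$ the family of subsets of $I$ determined by $\phi$. In particular $\tuple{I, \X_I} \models \wklstar + \eta$.

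The heart of the argument is to build a countable $\n = \tuple{N, \Y}$ with $N$ a first-order end-extension of $I$ such that (i)~$\tuple{N, \Y} \models T$, (ii)~$I \in \Y$ and $\{Y \in \Y : Y \subseteq I\} = \X_I$, and (iii)~no element of $\Y$ codes a bijection between $N$ and $I$. To do this, first invoke $\forall \Delta_0(\Sigma_1)$-conservativity of $T$ over $\rcastar$ to pass from the first-order reduct $I$ to a suitable first-order end-extension $N \supseteq I$ whose $\forall \Delta_0(\Sigma_1)$-theory is compatible with $T$. Then add sets to $\X_I \cup \{I\}$ step by step, either to satisfy the set-existence axioms of $T$ or to witness $T$'s existential requirements, using a Henkin-style enumeration that enforces (ii) and (iii) in the limit. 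Closure of $I$ under $\exp$ (inherited from $\supexp$ in the hypothesis) is crucial here: it gives enough room inside $I$ to interpret all required sets without forcing a new subset of $I$ outside $\X_I$, or a bijection $N \leftrightarrow I$, to appear.

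In the resulting model $\n$, the substructure $\tuple{I, S \restriction I, 0}$, interpreted using the $\Y$-subsets of $I$, is (by (ii)) precisely $\tuple{I, \X_I} \models \eta$, so $\n$ certifies that this substructure is an $\eta$-model. Yet $I$ is a proper initial segment of the canonical $\bbn$ of $\n$ (namely $N$), and (iii) forbids any $\Y$-coded bijection between the two. Thus inside $\n$ the two $\eta$-models are not isomorphic, contradicting the supposed provable categoricity. The main obstacle is executing (i)--(iii) simultaneously: one must exploit $\forall \Delta_0(\Sigma_1)$-conservativity and the $\exp$-closure of $I$ to satisfy every set-existence axiom of $T$ without ever enlarging $\{Y \in \Y : Y \subseteq I\}$ beyond $\X_I$ or admitting a bijection between $I$ and $N$. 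This bookkeeping is the technical crux of the proof.
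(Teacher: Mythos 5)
Your opening move --- taking a countable (recursively saturated) model of $\wklstar + \supexp + \eta$ and using Tanaka's self-embedding theorem for $\wklstar$ to obtain a proper cut $I$ with $\tuple{I,\X_I} \models \eta$ --- matches the paper. But the second half of your plan breaks down, and at a point that cannot be repaired by ``bookkeeping''. First, a concrete obstruction: you require $I \in \Y$ for a model $\n = \tuple{N,\Y}$ of $T \supseteq \rcastar$ in which $I$ is a bounded proper initial segment. This is impossible: $I$ contains $0$ and is closed under successor, so $\Delta^0_0$ induction with $I$ as a set parameter would force $I = N$. (The paper notes exactly this after Lemma \ref{lem:system-cut}: a proper cut is never an element of the second-order part.) Hence the ``substructure with universe $I$'' cannot literally live inside $\n$; one must instead realize the cut as a Peano system $\bba(J)$ via Lemma \ref{lem:system-cut}, which requires the cut to be $\Sigma^0_1$-definable in the target model --- a step your construction never provides. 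Second, and more fundamentally, conservativity of $T$ over $\rcastar$ for first-order $\forall\Delta_0(\Sigma_1)$ sentences is a purely first-order hypothesis; it gives no control over the second-order part of models of $T$, so there is no reason the Henkin construction you sketch --- which must satisfy the arbitrary (possibly $\wklstar$-incompatible) set-existence axioms of $T$ while freezing $\{Y \in \Y : Y \subseteq I\}$ at exactly $\X_I$ and excluding bijections --- can be carried out.

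The missing idea that makes the theorem work is an arithmetization step. After the self-embedding, the paper shrinks $M$ to the cut $K = \{y : \exists x \in I\, (y < \exp_{a+x}(2))\}$ for some $a \in M \setminus I$, so that $I$ becomes \emph{$\Sigma_1$-definable} in $\tuple{K,\X_K} \models \wklstar$ (this is where $\supexp$ is used). Then the statement ``there is a proper $\Sigma_1$-definable cut satisfying $\eta$'' is itself expressed as a single first-order $\exists\Delta_0(\Sigma_1)$ sentence $\eta^{\mathrm{FO}}$: since the cut is bounded by some $b$, the second-order quantifiers of $\eta$ relativized to it become bounded first-order quantifiers over codes of subsets of $\{0,\dots,b-1\}$. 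This sentence holds in $K$, hence is consistent with $\rcastar$, hence --- by precisely the conservativity you are given --- consistent with $T$. Any model $\tuple{L,\Y}$ of $T + \eta^{\mathrm{FO}}$ then contains, via Lemma \ref{lem:system-cut}, a genuine Peano system in $\Y$ satisfying $\eta$ whose internal cardinality is a proper cut in $L$, so it is automatically not isomorphic to $\bbn$; no bijection needs to be excluded by hand. Without this reduction of the second-order witness to a first-order sentence, the conservativity hypothesis has no purchase on the problem.
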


The proofs of our theorems make use of a weaker notion of isomorphism 
to $\tuple{\bbn, S, 0}$ studied in \change{\cite{sy:peanocat}}, 
that of ``almost isomorphism''. Intuitively speaking, a structure $\tuple{A, f, c}$
satisfying some basic axioms is almost isomorphic to $\tuple{\bbn, S, 0}$ if it is ``equal to or shorter than''
the natural numbers. The two crucial facts we prove and exploit are that almost isomorphism to $\tuple{\bbn, S, 0}$
can be characterized by a single sentence provably in $\rcastar$, and that structures 
almost isomorphic to $\tuple{\bbn, S, 0}$ correspond to $\Sigma^0_1$-definable cuts.

The paper is structured as follows. After \change{a 
preliminary} Section 1, we conduct our study of almost isomorphism to $\tuple{\bbn, S, 0}$ in Section 2. We then prove Theorem \ref{thm:main} in Section 3, Theorems \ref{thm:main2} and \ref{thm:negwkl} in Section 4, and Theorem \ref{thm:from-tanaka} in Section~5.

\section{Preliminaries}
We assume familiarity with subtheories of second-order arithmetic, as presented in \cite{sosoa}. Of the \change{``Big Five''} theories
featuring prominently in that book, we only need the two weakest: $\rca$, axiomatized by $\Delta^0_1$ comprehension and $\Sigma^0_1$ induction (and a finite list of simple basic axioms), and $\wkl$, which extends $\rca$ by the axiom WKL stating that an infinite binary tree has an infinite branch.

We also make use of some well-known fragments of first-order arithmetic, principally 
$\mathrm{I}\Delta_0 + \exp$, which extends induction for $\Delta_0$ formulas by an axiom $\exp$ stating the totality of exponentiation; $\mathrm{\mathrm{B}\Sigma_1}$, which extends $\mathrm{I}\Delta_0$ by the $\Sigma_1$ collection (bounding) principle; and $\mathrm{I}\Sigma_1$. For a comprehensive treatment of these and other subtheories of first-order arithmetic, refer to \cite{hp93}. 

\change{The well-known hierarchies defined in terms of alternations of first-order quantifiers make sense both for purely first-order formulas and for formulas allowing second-order parameters, and we will need notation to distinguish between the two cases. For classes of formulas with first-order quantification but also arbitrary second-order parameters, we use the $\Sigma^0_n$ notation standard in second-order arithmetic. On the other hand, when discussing classes of first-order formulas, we adopt a convention often used in first-order arithmetic and omit the superscript ``$^0$''. Thus, for instance, a $\Sigma_1$ formula is a first-order formula (with no second-order variables at all) containing a single block of existential quantifiers followed by a bounded part. More generally, if we want to speak of a formula possibly containing second-order parameters $\bar X$ but no other second-order parameters, we use notation of the form $\Sigma_n(\bar X)$ (to be understood as ``$\Sigma_n$ relativized to $\bar X$'').}

A formula is $\Delta_0(\Sigma_1)$ if it belongs to the closure of $\Sigma_1$ under boolean operations and bounded first-order quantifiers. \change{$\forall \Delta_0(\Sigma_1)$ (respectively $\exists \Delta_0(\Sigma_1)$) is the class of first-order formulas
which consist of a block of universal (respectively existential) quantifiers followed by a $\Delta_0(\Sigma_1)$ formula.}

The theory $\rcastar$ was introduced in \cite{simpson-smith}. It differs from $\rca$ in that the $\Sigma^0_1$ induction axiom is replaced by $\mathrm{I}\Delta^0_0 + \exp$. $\wklstar$ is $\rcastar$ plus the WKL axiom. Both $\rcastar$ and $\wklstar$ have
$\mathrm{\mathrm{B}\Sigma_1} + \exp$ as their first-order part, while the first-order part of $\rca$ and $\wkl$ is $\mathrm{I}\Sigma_1$.

We let $\supexp$ denote both the ``tower of exponents'' function defined by
$\supexp(x) = \exp_x(2)$ (where $\exp_0(2) =1, \exp_{x+1}(2)= 2^{\exp_x(2)}$) 
and the axiom saying that for every $x$, $\supexp(x)$ exists. $\Delta_0(\exp)$ stands for the class of bounded formulas
in the language extending the language of Peano Arithmetic by a symbol for $x^y$. 
$\mathrm{I}\Delta_0(\exp)$ is a definitional extension of $\mathrm{I}\Delta_0 + \exp$.

In any model $M$ of a first-order arithmetic theory (possibly the first-order part of a second-order structure), a \emph{cut} is a nonempty subset of $M$ which is downwards closed and closed under successor. For a cut $J$, we sometimes abuse notation and also write $J$ to denote the structure $\tuple{J, S, 0}$, or even $\tuple{J, +, \cdot, \le , 0, 1 }$ if $J$ happens to be closed under multiplication. 

If $\tuple{M, \X} \models\rcastar$ and $J$ is a cut in $M$, then $\X_J$ will denote the family of sets $\{X \cap J: X \in \X\}$.
\change{Throughout the paper, we frequently use the following simple but important result without further mention.
\begin{theorem*}[\cite{simpson-smith}, Theorem 4.8]
If $\tuple{M, \X} \models\rcastar$ and $J$ is a proper cut in $M$ which is closed under $\exp$, then $\tuple{ J, \X_J} \models \wklstar$.
\end{theorem*}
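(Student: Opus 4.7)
The plan is to verify, in $\tuple{J, \X_J}$, each axiom of $\wklstar$ (i.e., $\rcastar$ plus $\mathrm{WKL}$). For the first-order part, every $\Delta_0$-formula with parameters in $\X_J$ is absolute between $\tuple{J, \X_J}$ and $\tuple{M, \X}$: bounded quantifiers in $J$ range over elements already in $J$, and for $z \in J$ the queries $z \in X \cap J$ and $z \in X$ agree. This yields $\mathrm{I}\Delta_0$ in $\tuple{J, \X_J}$ from $\mathrm{I}\Delta_0$ in $\tuple{M, \X}$, and $\exp$ holds by the hypothesis that $J$ is closed under $\exp$.

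For $\Delta^0_1$-comprehension, suppose $\varphi(x) \equiv \exists y\,\theta_1(x, y)$ and $\psi(x) \equiv \exists y\,\theta_2(x, y)$ are $\Sigma^0_1$-formulas with parameters in $\X_J$ that are complementary in $\tuple{J, \X_J}$. I would pass to the \emph{race} versions in $M$:
\[ \phi_i(x) \equiv \exists z\,\bigl[\theta_i(x, z) \wedge \forall y < z\,\neg(\theta_1(x, y) \vee \theta_2(x, y))\bigr] \qquad (i = 1, 2), \]
which are disjoint $\Sigma^0_1$-formulas in $M$. For $x \in J$, complementarity in $J$ produces a $J$-witness for $\theta_1 \vee \theta_2$; by downward-closure of $J$, the least $M$-witness also lies in $J$, hence exactly one of $\phi_1(x), \phi_2(x)$ holds in $M$, and it records whether $\tuple{J, \X_J} \models \varphi(x)$. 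The subtlest step---and the main obstacle---is producing a single $X \in \X$ with $X \cap J = \{x \in J : M \models \phi_1(x)\}$, because $\phi_1 \vee \phi_2$ need not hold outside $J$, and $J$ itself is not $\X$-definable.

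For $\mathrm{WKL}$, let $T \in \X_J$ be $J$-infinite, write $T = T_0 \cap J$ with $T_0 \in \X$, and pass to the tree-closure $T' = \{\sigma \in T_0 : \forall \tau \preceq \sigma,\, \tau \in T_0\} \in \X$; tree-closure of $T$ in $J$ yields $T' \cap J = T$. The set $B = \{n \in M : \exists \sigma < 2^{n+1},\, \sigma \in T' \wedge \lh(\sigma) = n\}$ is $\Delta_0$ from $T'$, so $B \in \X$, and $J$-infiniteness of $T$ gives $J \subseteq B$. If every node of $T'$ had length in $J$, closure of $J$ under $\exp$ would give $T' \subseteq J$ and hence $B = J \in \X$; but $J \in \X$ would make $M \setminus J \in \X$ a nonempty set with least element $a$ (by $\mathrm{I}\Delta_0$), forcing $a - 1 \in J$ and then $a \in J$ by successor-closure---a contradiction. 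So some $\sigma \in T'$ has length $a \notin J$, and for each $n \in J$ the prefix $\sigma \restriction n$ lies in $J$ (since its value is below $2^{n+1} \in J$) and hence in $T' \cap J = T$. The branch is then coded in $\X_J$ as $\{n < a : \sigma(n) = 1\} \cap J$.
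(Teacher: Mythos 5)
The paper does not prove this statement at all: it is quoted verbatim as Theorem~4.8 of Simpson--Smith \cite{simpson-smith} and used as a black box, so there is no in-paper proof to compare against. Judged on its own, your outline follows the standard route (absoluteness of $\Delta_0$ formulas for the first-order axioms, a ``long node'' argument for WKL), and the WKL step is essentially complete and correct: the set $B$ of lengths realized in $T'$ cannot equal the proper cut $J$ because no proper cut is a set in a model of $\mathrm{I}\Delta^0_0$, so $T'$ has a node of length $a\notin J$, and the prefixes of that node of length in $J$ have codes below $2^{n+1}\in J$ and hence lie in $T$.

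The genuine gap is exactly where you flag it: you never produce the set $X\in\X$ needed for $\Delta^0_1$-comprehension in $\tuple{J,\X_J}$. The missing idea is to exploit the \emph{properness} of $J$, which your write-up states as a hypothesis but never uses in this step. Fix any $a\in M\setminus J$. You have already shown that for each $x\in J$ the least $M$-witness $z$ to $\theta_1(x,z)\vee\theta_2(x,z)$ lies in $J$, hence is $<a$; so for $x\in J$ the truth value of $\phi_1(x)$ in $M$ is unchanged if the outer existential quantifier is bounded by $a$. The formula
\[
\exists z<a\,\bigl[\theta_1(x,z)\wedge\forall y<z\,\neg(\theta_1(x,y)\vee\theta_2(x,y))\bigr]
\]
is $\Delta^0_0$ with parameters in $\X$ (after replacing each parameter $P\cap J$ by $P$, which is harmless by the same absoluteness argument, since $J$ is closed under the relevant term functions), so $\Delta^0_0$-comprehension in $\tuple{M,\X}$ yields $X\in\X$ with $X\cap J=\{x\in J:\tuple{J,\X_J}\models\varphi(x)\}$. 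The fact that $\phi_1\vee\phi_2$ may fail, or that both may hold, for $x$ outside $J$ is then irrelevant, because only $X\cap J$ matters. With this one addition the argument goes through; without it, the comprehension axiom --- which is the heart of $\rcastar$ --- is unverified.
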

}

\change{If $\tuple{M, \X} \models \rcastar$ and $A \in \X$, then $A$ is \emph{$M$-finite} (or simply \emph{finite} if we do not want to emphasize $M$) if there exists $a \in M$ such that all elements of $A$ are smaller than $a$. Otherwise, the set $A$ is \emph{($M$)-infinite}. For each $M$-finite set $A$ there is an element $a \in M$ coding $A$ in the sense that $A$ consists exactly of those $x \in M$ for which the $x$-th bit in the binary notation for $a$ is $1$. Moreover, $\rcastar$ has a well-behaved notion of cardinality of finite sets, which lets us define the \emph{internal cardinality}  $|A|_\m$ of any $A \in \X$ as $\sup(\{x \in M: A \textrm{ contains a finite subset with at least } {x} \textrm{ elements}\})$. $|A|_\m$ is an element of $M$ if $A$ is $M$-finite, and a cut in $M$ otherwise.}

$\bbn$ stands for the set of numbers defined by the formula $x = x$; in other words, $\bbn_M = M$. To refer to the set of standard natural numbers, we use the symbol $\omega$. \change{The general notational conventions regarding cuts apply also to $\bbn$: for instance, if there is no danger of confusion, we sometimes write that some structure is ``isomorphic to $\bbn$'' rather than ``isomorphic to $\tuple{\bbn, S,0}$''.}

\change{We will be interested mostly in structures of the form $\tuple{A, f, c}$, where $f$ is a unary function and $c$ an individual constant. The letter $\bba$ will always stand for some structure of this form. $\bba$ is a \emph{Peano system} if $f$ is one-to-one, $c \notin \mathrm{rng}(f)$, and $\bba$ satisfies the second-order induction axiom:
\begin{equation}\label{eqn:induction}\forall X \left[X(c) \wedge \forall a\, [X(a) \rightarrow X(f(a))] \rightarrow \forall a\, X(a)\right].\end{equation}
Second-order logic is considered here in its full version ---  that is, non-unary second-order quantifiers are allowed --- and interpreted according to the so-called standard semantics (cf.\ e.g.\ \cite{sep:sol}). Thus, the quantifier $\forall X$ with $X$ unary means ``for \emph{all} subsets of $A$'', $\forall X$ with $X$ binary means ``for \emph{all} binary relations on $A$'', etc. 
For instance, $\bba$ satisfies (\ref{eqn:induction}) exactly if there is no proper subset 
of $A$ containing $c$ and closed under $f$. Of course, from the perspective 
of a model $\m = \tuple{M, \X}$ of $\rcastar$ or some other 
fragment of second-order arithmetic, ``for all subsets of $A$'' means
``for all $X\in \X$ such that $X\subseteq A$''. After all, 
according to $\m$ there are no other subsets of $A$!}

\section{Almost isomorphism}\label{section:almost-iso}

A Peano system is said to be \emph{almost isomorphic to $\tuple{\bbn, S, 0}$}  if for every $a \in A$ there is some ${x \in \bbn}$ such that $f^x(c) = a$. \change{Here we take $f^x(c) = a$ to mean that there exists a sequence $\tuple{a_0, a_1, a_2, \ldots, a_x}$ such that $a_0 = c$, $a_{z+1} = f(a_z)$ for $z< x$, and $a_x = a$. Note that we need to explicitly assert the existence of this sequence, which we often refer to as $\tuple{c, f(c), f^2(c), \ldots, f^x(c)}$, because $\rcastar$ is too weak to prove 
that any function can be iterated an arbitrary number of times.}



Being almost isomorphic to $\bbn$ is a definable property:

\begin{lemma}\label{lem:almost-cat} 
There exists a $\Sigma^1_1 \wedge \Pi^1_1$ sentence $\xi$ in the language with one unary function $f$ and one individual constant $c$
such that $\rcastar$ proves: for every $\bba$, $\bba \models \xi$ if and only if $\bba$ is a Peano system
almost isomorphic to $\tuple{\bbn, S, 0}$.
\end{lemma}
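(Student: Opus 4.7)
I would let $\xi$ be the conjunction $\xi_1 \wedge \xi_2$, where $\xi_2$ is the $\Pi^1_1$ sentence asserting the Peano system axioms (``$f$ injective'', ``$c \notin \mathrm{rng}(f)$'', and the second-order induction axiom~(\ref{eqn:induction})), and $\xi_1$ is the $\Sigma^1_1$ sentence $\exists X\, \varphi(X)$, with $\varphi(X)$ a first-order conjunction stating that $X$ is the graph of a bijection from a downward-closed subset of $\bbn$ onto $A$ satisfying $(0, c) \in X$ and, whenever both $(n, a), (n+1, b) \in X$, $b = f(a)$. In the two-sorted reading of second-order sentences on $\bba$ inside a model of $\rcastar$, the witness $X$ is interpreted as an $\X$-relation with first coordinate in $\bbn = M$ and second in $A$; since $A \subseteq M$ and $\X$ is closed under pairing, this is unproblematic.

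The direction $\bba \models \xi \Rightarrow \bba$ Peano almost isomorphic to $\bbn$ is straightforward. The Peano axioms come from $\xi_2$. For almost isomorphism, given $a \in A$, let $X$ witness $\xi_1$ and let $n \in \bbn$ be the unique element with $(n, a) \in X$; the restriction $X \cap ([0, n] \times A)$ exists by $\Delta^0_0$ comprehension and, via the recursive clause of $\varphi$, codes the sequence $\tuple{c, f(c), \ldots, f^n(c) = a}$ witnessing $f^n(c) = a$.

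The interesting direction is the converse. Suppose $\bba$ is a Peano system almost isomorphic to $\bbn$. I construct the witness $X$ for $\xi_1$ using $\Delta^0_1$ comprehension. Thanks to $\exp$, $\rcastar$ has a standard coding of finite sequences by single elements of $M$, so the predicate
\[
\theta(n, a) \; := \; \text{``there is a sequence code of } \tuple{a_0, \ldots, a_n} \text{ with } a_0 = c,\, a_{i+1} = f(a_i),\, a_n = a\text{''}
\]
is genuinely $\Sigma^0_1$. Almost isomorphism guarantees that every $a \in A$ admits at least one witnessing $n$; the Peano axioms ($f$ injective and $c \notin \mathrm{rng}(f)$) give uniqueness of such $n$ via a $\Delta^0_0$ induction walking two candidate sequences backwards from their common endpoint. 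Hence, provably in $\rcastar$,
\[
\theta(n, a) \;\leftrightarrow\; a \in A \wedge \forall n'\, \bigl(\theta(n', a) \to n' = n\bigr),
\]
which makes $X := \{(n, a) : \theta(n, a)\}$ a $\Delta^0_1$ set, hence an element of $\X$; verifying the clauses of $\varphi(X)$ is then routine. The main obstacle lies precisely here: without $\Sigma^0_1$ induction no direct recursive construction of $X$ is available, so the argument hinges on coordinating sequence coding (from $\exp$), $\Sigma^0_1$ existence of witnesses (from almost isomorphism), and $\Pi^0_1$ uniqueness (from Peano) inside a single $\Delta^0_1$ comprehension instance.
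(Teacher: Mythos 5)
There is a genuine gap, and it is located in the very first step: your $\Sigma^1_1$ conjunct $\xi_1$ is not a sentence in the language with one unary function $f$ and one individual constant $c$. Under the semantics the lemma is about (standard second-order semantics over the structure $\bba = \tuple{A,f,c}$, as fixed in the Preliminaries), a second-order quantifier $\exists X$ with $X$ binary ranges over binary relations \emph{on $A$}, and first-order quantifiers range over $A$. Your witness $X$ is supposed to be the graph of a bijection from a downward-closed subset of $\bbn$ onto $A$; this is a relation between $\bbn$ and $A$, not a relation on $A$, and the clause ``downward-closed subset of $\bbn$'' needs the ordering of $\bbn$, which is not in the vocabulary $\{f,c\}$. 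The ``two-sorted reading'' you invoke, where $X$ is an arbitrary element of $\X$ with first coordinate in $M$ and second in $A$, is precisely \emph{not} the semantics in play; if one were allowed to mention $\bbn$ inside $\xi$, the lemma would reduce to restating the definition of almost isomorphism, and the whole difficulty it addresses --- expressing ``$\bba$ is no longer than $\bbn$'' \emph{internally to $\bba$} --- would be bypassed. So while the $\Delta^0_1$-comprehension construction in your converse direction is sound as a piece of reasoning inside the model of $\rcastar$ (existence of the witnessing $n$ from almost isomorphism, uniqueness from injectivity of $f$ via $\Delta^0_0$ induction --- this matches an argument actually used in the paper), the object it produces cannot serve as a witness for a second-order quantifier of a sentence in the required language.

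To repair this one has to replace the external comparison with $\bbn$ by something expressible over $A$ alone. The paper does this by taking the $\Sigma^1_1$ part to assert the existence of a discrete linear ordering $\preccurlyeq$ on $A$ with least element $c$ and successor function $f$ (a binary relation on $A$, so legitimately quantifiable), and adding a $\Pi^1_1$ part saying that for every such ordering and every $a$, the set of elements $\preccurlyeq$-below $a$ is Dedekind-finite --- where Dedekind-finiteness is again expressed by quantifying over binary relations on $A$ (candidate bijections with proper subsets). The substantive work is then to show, in $\rcastar$, that Dedekind-finiteness of a subset of $A$ coincides with finiteness, and to use $\Delta^0_0(\exp)$ induction and a counting argument on the finite set $[c,a]_\preccurlyeq$ to extract the sequence witnessing $a = f^w(c)$. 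Your write-up contains no substitute for this internalization step, which is the actual content of the lemma.
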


\begin{proof}
By definition, $\bba$ is a Peano system precisely if it satisfies the $\Pi^1_1$ sentence $\xi_{\mathrm{peano}}$:
\[f \textrm{ is 1-1} \wedge c \notin \mathrm{rng}(f) \wedge \forall X \left[X(c) \wedge \forall a\, [X(a) \rightarrow X(f(a))] \rightarrow \forall a\, X(a)\right].\]
The sentence $\xi$ will be the conjunction of $\xi_{\mathrm{peano}}$, the $\Sigma^1_1$ sentence $\xi_{\preccurlyeq,\Sigma}$:
\begin{center}
there exists a discrete linear ordering $\preccurlyeq$ \\ for which $c$ is the least element and $f$ is the successor function,
\end{center}
and the $\Pi^1_1$ sentence $\xi_\change{{\preccurlyeq, \Pi}}$:
\begin{center}
for every \change{discrete} linear ordering $\preccurlyeq$ with $c$ as least element and $f$ as successor \\
and for every $a$, the set of elements $\preccurlyeq$-below $a$ is Dedekind-finite. 
\end{center}

We say that a set $X$ is {\emph{Dedekind-finite}} if there is no bijection between $X$ and a proper subset of $X$. \change{Note that $\xi$ involves quantification over non-unary relations: linear orderings and (graphs of) bijections.}

\change{In verifying that $\xi$ characterizes Peano systems almost isomorphic to $\bbn$, we will make use of the fact that provably in $\rcastar$,  for any set $A$ and any $X \subseteq A$, $A \models \textrm{``}X \textrm{ is Dedekind-finite''}$ exactly if $X$ is finite. To see that this is true, note that if $X$ is infinite, then the map which takes 
$x \in X$ to the smallest $y \in X$ such that $x < y$ is a bijection between $X$ and its proper subset $X\setminus \{\min X\}$, and the graph of this bijection is a binary relation on $A$ witnessing $A \not \models \textrm{``}X \textrm{ is Dedekind-finite''}$. On the other hand, any witness for $A \not \models \textrm{``}X \textrm{ is Dedekind-finite''}$ must in fact be the graph of a bijection between $X$ and a proper subset of $X$, but such a bijection cannot exist for finite $X$ because all proper subsets of a finite set have strictly smaller  cardinality than the set itself. }

We first prove that Peano systems almost isomorphic to $\bbn$ satisfy $\xi_{\preccurlyeq,\Sigma}$ and $\xi_{\preccurlyeq,\Pi}$. Let $\bba$
be almost isomorphic to $\bbn$. Every $a \in A$ is of the form $f^x(c)$ for some $x \in \bbn$. Moreover, $x$ is unique. To see this, assume that $a~=~f^x(c)~=~f^{x+y}(c)$ and that 
$\tuple{c, f(c), \ldots, f^x(c)=a , f^{x+1}(c), \ldots, f^{x+y}(c)=a}$
is the sequence witnessing that $f^{x+y}(c)=a$ (by $\Delta^0_0$-induction, this sequence is unique and its first $x+1$ elements comprise the unique sequence witnessing $f^{x}(c)=a$).
If $y > 0$, then we have $c \neq f^{y}(c)$ and then $\Delta^0_0$-induction coupled with the injectivity of $f$ gives 
$f^{w}(c) \neq f^{w+y}(c)$ for all $w \le x$. So, $y = 0$.

Because of the uniqueness of the $f^x(c)$ representation for $a \in A$, we can define $\preccurlyeq$ on $A$ by $\Delta^0_1$-comprehension in the following way:
\[a \preccurlyeq b := \exists x\, \exists y\, (a = f^x(c) \wedge b = f^{y}(c) \wedge x \le y).\]
Clearly, $\preccurlyeq$ is a discrete linear ordering on $A$ with $c$ as the least element and $f$ as the successor function, so 
$\bba$ satisfies $\xi_{\preccurlyeq,\Sigma}$.

For each $a \in A$, the set of elements $\preccurlyeq$-below $a$ is finite. Moreover, if $\lessdot$ is any ordering 
of $A$ with $c$ as least element and $f$ as successor, then for each $a \in A$ the set
\[ \{b \in A : b \preccurlyeq a \Leftrightarrow b \lessdot a \} \]
contains $c$ and is closed under $f$. Since $\bba$ is a Peano system, $\lessdot$ has to coincide with $\preccurlyeq$. Thus, $\bba$ satisfies $\xi_{\preccurlyeq,\Pi}$.

For a proof in the other direction, let  $\bba$ be a Peano system satisfying $\xi_{\preccurlyeq,\Sigma}$  and $\xi_{\preccurlyeq,\Pi}$. Let $\preccurlyeq$ be an ordering on $A$ witnessing $\xi_{\preccurlyeq,\Sigma}$. Take some $a \in A$. By $\xi_{\preccurlyeq,\Pi}$, the set $[c,a]_\preccurlyeq$ of elements $\preccurlyeq$-below $a$ is finite. Let $\ell$ be the cardinality of $[c,a]_\preccurlyeq$ and let $b$ be the $\le$-maximal element of $[c,a]_\preccurlyeq$. By $\Delta^0_0(\exp)$-induction on $x$ prove that there is an element below $b^{x+1}$ coding a  sequence $\tuple{s_0, \ldots, s_x}$ such that $s_0 = c$ and for all $y<x$, either $s_{y+1} = f(s_y) \preccurlyeq a$ or $s_{y+1} = s_y = a$. Take such a sequence  for $x = \ell-1$. If $a$ does not appear in the sequence, then by $\Delta^0_0(\exp)$-induction the sequence has the form $\tuple{c, f(c), \ldots, f^{\ell-1}(c)}$ and all its entries are distinct elements of $[c,a]_\preccurlyeq \setminus\{a\}$; an impossibility, given that $[c,a]_\preccurlyeq \setminus\{a\}$ only has $\ell -1$ elements. So, $a$ must appear somewhere in the sequence. Taking $w$ to be the least such that $a = s_w$, we easily verify that $a = f^w(c)$. 
\end{proof}

\begin{remark}
We do not know whether in $\rcastar$ it is possible to characterize $\tuple{\bbn, S, 0}$ up to almost isomorphism by a $\Pi^1_1$ sentence. This does become possible in the case of $\tuple{\bbn, \le}$ (given a suitable definition of almost isomorphism, cf.\ \change{\cite{sy:peanocat}}), where there is no need for the $\Sigma^1_1$ part of the characterization which guarantees the existence of a suitable ordering.
\end{remark}

An important fact about Peano systems almost isomorphic to $\bbn$ is that their isomorphism types correspond to $\Sigma^0_1$-definable cuts. This correspondence, which will play a major role in the proofs of our main theorems, is formalized in the following definition and lemma. 

\begin{definition} Let $\m = \tuple{M, \X}$ be a model of $\rcastar$. For a Peano system $\bba$ in $\m$ which is almost isomorphic to $\tuple{\bbn, S, 0}$, let $J(\bba)$ be the cut defined in $\m$ by the $\Sigma^0_1$ formula $\varphi(x)$:
\[\exists a \in A\, f^x(c) = a.\]
For a $\Sigma^0_1$-definable cut $J$ in $\m$, let the structure $\bba(J)$ be $\tuple{A_J, f_J, c_J}$, where the set $A_J$ consists of all the pairs $\tuple{x,y_x}$ such that $y_x$ is the smallest witness for the formula $x \in J$, the function $f_J$ maps $\tuple{x,y_x}$ to $\tuple{x+1, y_{x+1}}$, and $c_J$ equals $\tuple{0, y_0}$.
\end{definition}

\begin{lemma} \label{lem:system-cut}
Let $\m = \tuple{M, \X}$ be a model of $\rcastar$. The following holds:
\begin{itemize}
\item[(a)] for a $\Sigma^0_1$-definable cut $J$ in $\m$, the structure $\bba(J)$ is a Peano system almost isomorphic to $\tuple{\bbn, S, 0}$, and $J(\bba(J)) = J$,
\item[(b)] if $\bba \in \X$ is a Peano system almost isomorphic to $\tuple{\bbn, S, 0}$, then there is an isomorphism in $\m$ between $\bba(J(\bba))$ and $\bba$,
\item[(c)]  if $\bba \in \X$ is a Peano system almost isomorphic to $\tuple{\bbn, S, 0}$, then there is an isomorphism in $\m$ between $\bba$ and $J(\bba)$, which also induces an isomorphism between the second-order structures $\tuple{\bba, \X \cap \P(A)}$ and $\tuple{J(\bba), \X_{J(\bba)}}$.
\end{itemize}
\end{lemma}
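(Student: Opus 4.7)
The plan is to prove all three parts by directly unpacking the definitions, using $\mathrm{B}\Sigma_1$ to bypass the absence of $\Sigma^0_1$-induction in (a) and the canonical ordering furnished by Lemma~\ref{lem:almost-cat} in (b) and (c).

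For (a), write $J = \{x : \exists y\, \psi(x,y)\}$ with $\psi \in \Delta_0$. Then $A_J$ is $\Delta_0$-definable and lies in $\X$ by $\Delta^0_1$-comprehension; injectivity of $f_J$ and the condition $c_J \notin \mathrm{rng}(f_J)$ follow from inspection of first coordinates. The main content is the induction axiom. Given $X \in \X$ with $X \subseteq A_J$, $c_J \in X$, and $X$ closed under $f_J$, and given $\tuple{x, y_x} \in A_J$, I would apply $\mathrm{B}\Sigma_1$ to the family $\{\exists y\, \psi(i, y) : i \le x\}$ to obtain a single $z \in M$ bounding every $y_i$ with $i \le x$. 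The property ``$\tuple{i, y_i} \in X$'' then becomes $\Delta^0_0$ in $X$ and $z$, and $\Delta^0_0$-induction on $i \le x$ gives $\tuple{x, y_x} \in X$. The same bounding step yields almost-isomorphism: with entries bounded by $\tuple{x, z}$, the sequence $\tuple{\tuple{0, y_0}, \ldots, \tuple{x, y_x}}$ is coded by an element of $M$ via $\exp$. Finally, $J(\bba(J)) = J$: the forward inclusion is witnessed by these sequences, while for the reverse, the observation that any $f_J$-iterate starting at $c_J$ must remain in $A_J$ forces the first coordinate of its endpoint to be the iteration count, which therefore lies in $J$.

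For (c), I would apply Lemma~\ref{lem:almost-cat} to extract a discrete linear ordering $\preccurlyeq \in \X$ on $A$ with $c$ least, $f$ as successor, and every initial segment $[c,a]_\preccurlyeq$ finite. Define $\mathrm{ord} : A \to M$ by $\mathrm{ord}(a) = |[c,a]_\preccurlyeq|_\m - 1$; this function is in $\X$ by $\Delta^0_1$-comprehension and satisfies $\mathrm{ord}(c) = 0$, $\mathrm{ord}(f(a)) = \mathrm{ord}(a) + 1$, and injectivity. Enumerating $[c, a]_\preccurlyeq$ in $\preccurlyeq$-order gives a sequence witnessing $\mathrm{ord}(a) \in J(\bba)$; conversely, any witness for $n \in J(\bba)$ ends in an $a \in A$ with $\mathrm{ord}(a) = n$. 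Thus $\mathrm{ord}(A) = J(\bba)$, and $\mathrm{ord}$ is the required first-order isomorphism. The map $X \mapsto \mathrm{ord}(X)$ sends $\X \cap \P(A)$ to $\X_{J(\bba)}$ with inverse $Y \mapsto \mathrm{ord}^{-1}(Y)$; both directions are available by $\Delta^0_1$-comprehension using the graph of $\mathrm{ord}$ as a parameter, yielding the isomorphism of second-order structures. Part (b) follows by composition: the map $\tuple{x, y_x} \mapsto \mathrm{ord}^{-1}(x)$ sends $\bba(J(\bba))$ to $\bba$, using part (a) applied to $J = J(\bba)$ to identify $\bba(J(\bba))$ with $\tuple{J(\bba), S, 0}$ up to isomorphism.

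The principal obstacle is the induction axiom in (a): the minimum witnesses $y_i$ for $i \le x$ are a priori unbounded in $M$, blocking any direct bounded-induction argument. $\mathrm{B}\Sigma_1$ collapses this $\Sigma_1$ information into a uniform bound $z$---this is precisely the axiomatic ingredient distinguishing $\rcastar$ from the weaker fragment $\mathrm{I}\Delta_0 + \exp$---and once $z$ is in hand, the remaining reasoning is routine $\Delta^0_0$ bookkeeping together with $\exp$ for the required sequence codings.
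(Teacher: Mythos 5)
Your parts (a) and (b) match the paper's argument: $\Sigma^0_1$ collection (which $\rcastar$ does prove, with set parameters) bounds the minimal witnesses $y_0,\ldots,y_x$, after which $\Delta^0_0(\exp)$ induction along the coded sequence handles both the induction axiom and almost isomorphism; the map $f^x(c)\mapsto\tuple{x,y_x}$ gives (b). The first-order isomorphism in (c) is also fine. The problem is the second-order half of (c).

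You assert that $X\mapsto\mathrm{ord}(X)$ carries $\X\cap\P(A)$ into $\X_{J(\bba)}$ ``by $\Delta^0_1$-comprehension using the graph of $\mathrm{ord}$ as a parameter''. This does not work. The set $\mathrm{ord}(X)=\{n:\exists a\,(a\in X\wedge\mathrm{ord}(a)=n)\}$ is only $\Sigma^0_1$: its complement is $(M\setminus J(\bba))\cup\mathrm{ord}(A\setminus X)$, and $M\setminus J(\bba)$ is not $\Sigma^0_1$ when $J(\bba)$ is a proper cut. The case $X=A$ already refutes the claim: $\mathrm{ord}(A)=J(\bba)$ is a proper $\Sigma^0_1$-definable cut, hence not an element of $\X$ at all (any cut lying in $\X$ equals $M$ by $\Delta^0_0$ induction with that set as parameter). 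Membership in $\X_{J(\bba)}$ is not membership in $\X$; you must produce some $Z\in\X$, defined on all of $M$, with $Z\cap J(\bba)=\mathrm{ord}(X)$, i.e.\ you must extend the characteristic function of $\mathrm{ord}(X)$ past the cut. This cannot be read off from any single bounded approximation, because the minimal witnesses $y_x$ for $x\in J(\bba)$ are cofinal in $M$ (otherwise $J(\bba)$ would be bounded-definable, hence in $\X$, hence equal to $M$). This extension step is the real content of part (c), and it is where the paper does its work: pass to a countable model, extend $\X$ to $\X^+$ with $\tuple{M,\X^+}\models\wklstar$, take an infinite branch $B\in\X^+$ of the $\Delta_0(X)$-definable tree of strings consistent with $X$ read through the pairs $\tuple{x,y_x}$, observe that $B\cap J(\bba)=\mathrm{ord}(X)$, and finally truncate $B$ to an $M$-finite set, which must already lie in $\X$. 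Your proposal is missing this entire step; only the converse direction, $\mathrm{ord}^{-1}(Z\cap J(\bba))\in\X$, is the immediate comprehension argument you describe.
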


Although all the isomorphisms between first-order structures mentioned in Lemma \ref{lem:system-cut} are elements of $\X$, a cut is not itself an element of $\X$ unless it equals $M$ (because induction fails for the formula $x \in J$ whenever $J$ is a proper cut). Obviously, the isomorphism between second-order structures mentioned in part (c) is also outside $\X$.

\begin{proof}
For a $\Sigma^0_1$-definable cut $J$ in $\m$, it is clear that $A_J$ and $f_J$ are elements of $\X$, that $f_J$ is an injection from $A_J$ into $A_J$, and that $c_J$ is outside the range of $f_J$. Furthermore, for every $\tuple{x, y_x} \in A_J$,
$\Sigma^0_1$ collection in $\m$ guarantees that there is a common upper bound on $y_0, \ldots, y_x$, so $\Delta^0_0$ induction is enough to show that the sequence $\tuple{c_J, f_J(c_J),\ldots, f^x_J(c_J) = \tuple{x, y_x}   }$ exists. If $X \subseteq A_J$, $X \in \X$, is such that $c_J \in X$ but $f^x_J(c_J) \notin X$, then $\Delta^0_0$ induction along the sequence $\tuple{c_J, f_J(c_J),\ldots, f^x_J(c_J)}$ finds some $w < x$ such that $f^w_J(c_J) \in X$ but $f_J(f^w_J(c_J))  \notin X$. Thus, 
$\bba(J)$ is a Peano system almost isomorphic to $\bbn$, and clearly $J(\bba(J))$ equals $J$, so part (a) is proved.

For part (b), if $\bba$ is almost isomorphic to $\bbn$, then each $a \in A$ has the form $a = f^x(c)$ for some $x \in J(\bba)$, and we know from the proof of Lemma \ref{lem:almost-cat} that the element $x$ is unique. Thus, the mapping which takes $f^x(c) \in \bba$ to $\tuple{x, y_x} \in \bba(J(\bba))$ is guaranteed to exist in $\m$ by $\Delta^0_1$ comprehension. It follows easily from the definitions of $J(\bba)$ and $\bba(J)$ that the mapping $f^x(c) \mapsto \tuple{x, y_x}$ is an isomorphism between $\bba$ and $\bba(J(\bba))$.

For part (c), we assume that $\bba$ equals $\bba(J(\bba))$, which we may do w.l.o.g.\ by part (b). The isomorphism between $\bba$ and $J(\bba)$ is given by $\tuple{x, y_x} \mapsto x$. To prove that this  also induces an isomorphism between $\tuple{\bba, \X \cap \P(A)}$ and $\tuple{J(\bba), \X_{J(\bba)}}$, 
we have to show that for any $X \subseteq A$, it holds that $X \in \X$ exactly if $\{x: \tuple{x,y_x} \in X\}$ has the form $Z \cap J(\bba)$ for some
$Z \in \X$. \change{This is easy if $J(\bba) = M$, so below we assume $J(\bba) \neq M$.}

The ``if'' direction is immediate: given $Z \in \X$, the set $\{\tuple{x,y_x}: x \in Z\}$ is $\Delta_0(Z)$ and thus belongs to $\X$.

To deal with the other direction, we assume that $\m$ is countable. We can do this w.l.o.g.\ because $J(\bba)$ is a definable cut, so the existence of a counterexample in some model would imply the existence of a counterexample in a countable model by a downwards Skolem-L\"owenheim argument.

By \cite[Theorem 4.6]{simpson-smith}, the countability of $\m$ means that we can extend $\X$ to a family $\X^+ \supseteq \X$ such that $\tuple{M, \X^+} \models \wklstar$. Note that there are no \change{$M$}-finite sets in $\X^+ \setminus \X$. 
\change{This is because for an $M$-finite set $X \in \X^+$ there is some $z \in M$ such that \[X = \{x: \textrm{ the }x\textrm{-th bit in the binary notation for }z\textrm{ is }1\}.\] 
Therefore, $X$ is $\Delta_0$-definable with parameter $z$ and so $X \in \X$.}

Now consider some $X \in \X$, $X \subseteq A$. Let $T$ be the set consisting of the finite binary strings $s$ satisfying:
\[\as{a,x}{\lh(s)} \big[ (a= \tuple{x,y_x} \wedge a \in X \rightarrow (s)_{x} =1) \wedge (a= \tuple{x,y_x} \wedge a \in A \setminus X 
\rightarrow (s)_{x} = 0)\big]. \]
$T$ is $\Delta_0(X)$-definable, so it belongs to $\X$, and it is easy to show that it is an infinite tree.
Let $B \in \X^+$ be an infinite branch of $T$. Then $\{x: \tuple{x, y_x} \in X\} = B \cap J(\bba)$. However, $B \cap J(\bba)$ can also be written
as $(B \cap \{0, \ldots, z\}) \cap J(\bba)$ for an arbitrary $z \in M \setminus J(\bba)$, and $B \cap \{0, \ldots, z\}$, being a finite set,
belongs to $\X$. 
\end{proof}

\begin{corollary}\label{cor:system-wkl}
Let $\m = \change{\tuple{M, \X}}$ be a model of $\rcastar$. Let $\bba \in \X$ be a Peano system almost isomorphic to $\tuple{\bbn, S, 0}$. Assume that $J(\bba)$ is a proper cut closed under $\exp$, that $\preccurlyeq$ is a linear ordering on $A$ with least element $c$ and successor function $f$, and that $\oplus, \otimes$ are operations on $A$ which satisfy the usual recursive definitions of addition \emph{resp}.\ multiplication with respect to least element $c$ and successor $f$. Then 
$\tuple{\tuple{\change{A}, \oplus, \otimes, \preccurlyeq, c, f(c)}, \X \cap \P(A)} \models \wklstar$.
\end{corollary}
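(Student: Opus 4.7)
The plan is to reduce the corollary to the Simpson--Smith theorem quoted in the preliminaries by means of the isomorphism from Lemma \ref{lem:system-cut}(c). That lemma supplies an isomorphism $\phi \in \X$ between the second-order structures $\tuple{\bba, \X \cap \P(A)}$ and $\tuple{J(\bba), \X_{J(\bba)}}$ in the language $\{f, c\}$. Since $J(\bba)$ is by hypothesis a proper cut closed under $\exp$, the Simpson--Smith theorem yields $\tuple{J(\bba), \X_{J(\bba)}} \models \wklstar$ in the full arithmetic language $\{+, \cdot, \le, 0, 1\}$. It therefore suffices to show that $\phi$ is also an isomorphism between $\tuple{A, \oplus, \otimes, \preccurlyeq, c, f(c)}$ and $\tuple{J(\bba), +, \cdot, \le, 0, 1}$, after which $\wklstar$ transfers across the isomorphism.

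To achieve this, I would pull back $+, \cdot, \le$ on $J(\bba)$ through $\phi$ to obtain auxiliary operations $+^*, \cdot^*, \le^*$ on $A$; these exist in $\X$ by $\Delta^0_0$ comprehension from the parameter $\phi$, since $J(\bba)$ is closed under $+$ and $\cdot$ as it is closed under $\exp$. Directly from the definitions, $\le^*$ is a discrete linear ordering on $A$ with $c$ as least element and $f$ as successor, $+^*$ satisfies $a +^* c = a$ and $a +^* f(b) = f(a +^* b)$, and $\cdot^*$ satisfies the analogous recursive equations for multiplication. The goal is then to identify $\preccurlyeq = \le^*$, $\oplus = +^*$, and $\otimes = \cdot^*$, that is, to establish uniqueness of such operations on a Peano system.

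For each of the three identifications the argument is the same. Fix an element $a \in A$ and consider the set of those $b \in A$ for which the relevant equation holds (e.g.\ $\{b \in A : a \oplus b = a +^* b\}$). With $a$ fixed, this set is $\Delta^0_0$ in the parameters $a, \oplus, +^*$, so it belongs to $\X \cap \P(A)$; it contains $c$ and is closed under $f$ thanks to the recursive equations satisfied by both operations; therefore the second-order induction axiom of the Peano system $\bba$ forces it to equal $A$. Since $a$ was arbitrary, $\oplus$ and $+^*$ agree everywhere, and similarly for the other two operations. The main technical point --- and really the only subtle step --- is precisely this: by fixing one argument of each binary operation and inducting only on the other, the induction set stays $\Delta^0_0$ and hence falls within the scope of the induction axiom available to the Peano system in $\rcastar$. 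Once the three identifications are established, $\phi$ is an isomorphism in the full arithmetic language and the conclusion follows.
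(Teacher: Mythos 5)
Your proposal is correct and follows essentially the same route as the paper: both arguments transfer the Simpson--Smith theorem for proper cuts closed under $\exp$ across the isomorphism of Lemma \ref{lem:system-cut}(c), after using the second-order induction axiom of the Peano system to pin down $\oplus$, $\otimes$, $\preccurlyeq$ as the unique operations satisfying the recursive definitions (the paper states this as computing their values on the pairs $\tuple{x,y_x}$, which is the same induction you spell out).
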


\begin{proof}
Write $\mathring \bba$ for $\tuple{\change{A}, \oplus, \otimes, \le, c, f(c)}$. By Lemma \ref{lem:system-cut} part (b), we can assume w.l.o.g. that $\bba = \bba(J(\bba))$. Using the fact that $\bba$ is a Peano system, we can prove that for every $x, z \in J(\bba)$:
\begin{align*}
\tuple{x, y_x} \oplus \tuple{z, y_z} & =  \tuple{x+z, y_{x+z}},\\
\tuple{x, y_x} \otimes \tuple{z, y_z} & =  \tuple{x \cdot z, y_{x \cdot z}},\\  
\tuple{x, y_x} \preccurlyeq \tuple{z, y_z} &\textrm{ iff }  x \le z.
\end{align*} 
By the obvious extension of Lemma \ref{lem:system-cut} part (c) to structures with addition, multiplication and ordering, $\tuple{\mathring \bba, \X \cap \P(A)}$ is isomorphic to $\tuple{J(A), \X_{J(A)}}$. Since $J(\bba)$ is proper and closed under $\exp$, this means that $\tuple{\mathring \bba, \X \cap \P(A)} \models \wklstar$. 
\end{proof}

\begin{remark}
It was shown in \change{\cite[Lemma 2.2]{sy:peanocat}} that in $\rca$ a Peano system almost isomorphic to $\bbn$ is actually isomorphic to $\bbn$. In light of Lemma \ref{lem:system-cut}, this is a reflection of the fact that in $\rca$ there are no proper $\Sigma^0_1$-definable cuts. 

Informally speaking, a Peano system which is not almost isomorphic to $\bbn$ is ``too long'', since it contains elements which cannot be obtained by starting at zero and iterating successor finitely many times. On the other hand, a Peano system which is almost isomorphic but not isomorphic to $\bbn$ is ``too short''. The results of this section, together with our Theorem \ref{thm:main}, give precise meaning to the intuitive idea strongly suggested by Table 2 of \change{\cite{sy:peanocat}}, that the problem with characterizing the natural numbers in  $\rcastar$ is ruling out structures that are ``too short'' rather than ``too long''.
\end{remark}

\section{Characterizations: basic case}\label{section:basic-case}

In this section, we prove Theorem \ref{thm:main}.

\begin{theorem1} Let $\psi$ be a second-order sentence in the language with one unary function $f$ 
and one individual constant $c$. If $\wklstar$ proves that $\tuple{\bbn, S, 0} \models \psi$, then over $\rcastar$ the statement 
``for every $\bba$, if $\bba \models \psi$, then there exists a bijection between 
$\bbn$ and $A$'' implies $\rca$.
\end{theorem1}

We use a model-theoretic argument based on the work of Section
\ref{section:almost-iso} and a lemma  about cuts in models of $\mathrm{I}\Delta_0 + \exp + \neg \mathrm{I}\Sigma_1$.

\begin{lemma}\label{lem:cuts}
Let $M \models \mathrm{I}\Delta_0 + \exp + \neg \mathrm{I}\Sigma_1$. There exists a proper $\Sigma_1$-definable cut $J \subseteq M$ closed under $\exp$.
\end{lemma}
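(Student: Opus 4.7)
My plan is to use the failure of $\mathrm{I}\Sigma_1$ to produce a proper $\Sigma_1$-cut, and then to refine it so that it becomes closed under $\exp$, exploiting the totality of $\exp$ in $M$.

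First, using $\neg\mathrm{I}\Sigma_1$, I would fix a $\Sigma_1$-formula $\varphi(x) = \exists y\,\theta(x,y)$ with $\theta \in \Delta_0$, together with an element $a \in M$, such that $M \models \varphi(0)$, $M \models \forall x(\varphi(x) \to \varphi(x+1))$, and $M \models \neg\varphi(a)$. The standard trick is to replace $\varphi$ by its uniformly bounded form
$$\tilde\varphi(x) := \exists y\,\forall z \le x\,\exists w \le y\,\theta(z,w),$$
which is still $\Sigma_1$. I would then verify that $J_0 := \{x : \tilde\varphi(x)\}$ is a proper $\Sigma_1$-cut: downward closure is automatic; closure under successor follows by combining a bounding witness $y$ for $\tilde\varphi(x)$ with the witness $w$ for $\varphi(x+1)$ (guaranteed by the closure hypothesis on $\varphi$) to produce the uniform bound $\max(y,w)$ for $\tilde\varphi(x+1)$; and properness holds because $\tilde\varphi(a)$ would imply $\varphi(a)$, contradicting the choice of $a$.

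The central step is to refine $J_0$ to a $\Sigma_1$-cut $J$ that is also closed under $x \mapsto 2^x$. The naive candidate $\{x : 2^x \in J_0\}$ is $\Sigma_1$ and downward closed, but closure under successor fails unless $J_0$ is already closed under doubling. My plan is therefore first to pass to an intermediate $\Sigma_1$-cut closed under doubling, for instance $J_0' := \{x : 2x \in J_0\}$ --- closure of $J_0'$ under successor reduces to closure of $J_0$ under ``adding $2$'' (two successor steps), and properness of $J_0'$ follows from properness of $J_0$. With $J_0'$ in hand, I would set $J := \{x : 2^x \in J_0'\}$, which is a genuine $\Sigma_1$-cut thanks to closure of $J_0'$ under doubling and to $\exp$-totality.

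The main obstacle is closing the loop: showing that $J$ itself is closed under $\exp$, i.e., that $x \in J$ implies $2^{2^x} \in J_0'$. My plan for this is to iterate the refinement further (enforcing closure under squaring, then under $\exp$ at the relevant points), or, more ambitiously, to collapse the iteration into a single $\Sigma_1$-formula quantifying existentially over the ``height of an $\exp$-tower'' certifying membership --- something like requiring the existence of a coded sequence $\tuple{b_0,\ldots,b_n}$ with $b_0 = x$, $b_{i+1} = 2^{b_i}$, and $\tilde\varphi(b_n)$. The hard part will be verifying simultaneously that such a $J$ is downward closed, closed under successor, proper, and closed under $\exp$. The delicate point is that $\exp$-totality in $M$ permits extending any finite tower by one more step (which is what gives closure under $\exp$), while the counterexample $a$ to $\varphi$ must be used to prevent $J$ from degenerating into all of $M$; this interaction is where I expect the bulk of the technical work to lie.
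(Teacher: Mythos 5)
There is a genuine gap, and it sits exactly where you predicted the ``bulk of the technical work'' would lie: the passage from a proper $\Sigma_1$-cut to one closed under $\exp$. Your first step (the uniformly bounded $\tilde\varphi$ giving a proper $\Sigma_1$-cut $J_0$) is fine and matches the paper. The refinement, however, does not work. First, $J_0' := \{x : 2x \in J_0\}$ is indeed a proper $\Sigma_1$-cut, but it is \emph{not} closed under doubling: $2x \in J_0'$ requires $4x \in J_0$, which does not follow from $2x \in J_0$ (if $J_0$ is the downward closure of $c + \omega$ for nonstandard $c$, then $J_0'$ is essentially the cut at $c/2$, and doubling an element just below $c/2$ leaves $J_0'$). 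More generally, no finite iteration of halving or of $x \mapsto \lfloor\log x\rfloor$ can turn an arbitrary cut into one closed under doubling, let alone under $\exp$; one would have to ``take logs $\omega$ many times'', which is not a $\Sigma_1$ condition. Second, the tower formula you propose --- $\exists \tuple{b_0,\dots,b_n}$ with $b_0 = x$, $b_{i+1} = 2^{b_i}$ and $b_n$ satisfying $\tilde\varphi$ --- defines exactly $J_0$ again: the case $n=0$ gives $J_0 \subseteq J$, and conversely towers increase while $J_0$ is downward closed, so $\tilde\varphi(b_n)$ already forces $\tilde\varphi(x)$. So that $J$ is no better than $J_0$.

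The idea that actually works (and is the paper's) inverts the role of the tower: the element of the original cut should be the \emph{height} of the tower, not its base. If $\supexp$ is total in $M$, set $J := \{y : \exists x \in K\, (y < \supexp(x))\}$, where $K$ is the proper $\Sigma_1$-cut obtained from $\neg\mathrm{I}\Sigma_1$; then $y < \supexp(x)$ gives $2^y < \supexp(x+1)$ with $x+1 \in K$, so closure under $\exp$ comes for free from closure of $K$ under successor, and properness is witnessed by $\supexp(b)$ for any $b \notin K$. If $\supexp$ is not total, the domain $\Log^*(M)$ of $\supexp$ is itself a proper $\Sigma_1$-cut, and a short case analysis (is $\Log^*(M)$ closed under $\exp$? under addition?) either uses it directly or shrinks it by one application of $\log$ or of halving to a proper subcut $K \subsetneq \Log^*(M)$, to which the same $\supexp$-blow-up applies, with properness witnessed by $\supexp(b)$ for $b \in \Log^*(M)\setminus K$. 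Your proposal contains neither the height-of-tower trick nor the case distinction on the totality of $\supexp$, so as it stands it does not yield the lemma.
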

\begin{proof}
We need to consider a few cases. 

\emph{Case 1.} $M \models \supexp$. Since $M \not \models \mathrm{I}\Sigma_1$, there exists a $\Sigma_1$ formula $\varphi(x)$, possibly with parameters, which defines a proper subset of $M$ closed under successor. Replacing $\varphi(x)$ by the formula $\hat \varphi(x)$: ``there exists a sequence witnessing that for all $y \le x$, $\varphi(y)$ holds'', we obtain a proper $\Sigma_1$-definable cut $K \subseteq M$. Define:
\[J := \{y: \exists x \in K \, (y < \supexp(x)) \}.\]  
$J$ is a cut closed under $\exp$ because $K$ is a cut, and it is proper because it does not contain $\supexp(b)$ for any $b \notin K$.

The remaining cases all assume that $M \not \models \supexp$. Let $\Log^*(M)$ denote the domain of $\supexp$ in $M$.
By the case assumption and the fact that $M \models \exp$, $\Log^*(M)$ is a proper $\Sigma_1$-definable cut in $M$.
 
\emph{Case 2.} $\Log^*(M)$ is closed under $\exp$. Define $J := \Log^*(M)$.

\emph{Case 3.} $\Log^*(M)$ is closed under addition but not under $\exp$. Let $\Log(\Log^*(M))$ be the subset of $M$ defined as $\{x: \exp(x) \in \Log^*(M)\}$. Since $\Log^*(M)$ is closed under addition, $\Log(\Log^*(M))$ is a cut. Moreover, $\Log(\Log^*(M)) \subsetneq \Log^*(M)$, because $\Log^*(M)$ is not closed under $\exp$. Define:
\[J :=  \{y: \exists x \in \Log(\Log^*(M)) \, (y < \supexp(x)) \}. \]
$J$ is a cut closed under $\exp$ because $\Log(\Log^*(M))$ is a cut, and it is proper because it does not contain 
$\supexp(b)$ for any $b \in \Log^*(M) \setminus \Log(\Log^*(M))$.

\emph{Case 4.} $\Log^*(M)$ is not closed under addition. Let $\frac{1}{2}\Log^*(M)$ be the subset of $M$ defined as 
$\{x: 2x \in \Log^*(M)\}$. Since $\Log^*(M)$ is closed under successor, $\frac{1}{2}\Log^*(M)$ is a cut. Moreover, 
$\frac{1}{2}\Log^*(M) \subsetneq \Log^*(M)$, because $\Log^*(M)$ is not closed under addition. Define:
\[J:= \{y: \exists x \in \tfrac{1}{2}\Log^*(M) \, (y < \supexp(x)) \}. \]
$J$ is a cut closed under $\exp$ because $\frac{1}{2}\Log^*(M)$ is a cut, and it is proper because it does not contain 
$\supexp(b)$ for any $b \in \Log^*(M) \setminus \frac{1}{2}\Log^*(M)$.
\end{proof} 

\begin{remark}
Inspection of the proof reveals immediately that Lemma \ref{lem:cuts} relativizes, in the sense that in a model of 
$\mathrm{I}\Delta_0(X) + \exp + \neg \mathrm{I}\Sigma_1(X)$ there is a $\Sigma_1(X)$-definable proper cut closed under $\exp$.
\end{remark}

\begin{remark}
The method used to prove Lemma \ref{lem:cuts} shows the following result: for any $n \in \omega$, there is a definable cut in $\mathrm{I}\Delta_0 + \exp$, provably closed under $\exp$, which is proper in all models of $\mathrm{I}\Delta_0 + \exp + \neg \mathrm{I}\Sigma_n$.
In contrast, {there is no definable cut in $\mathrm{I}\Delta_0 + \exp$ provably closed under $\supexp$; otherwise, $\mathrm{I}\Delta_0 +  \exp$ would prove its consistency relativized to a definable cut, which would contradict \change{Theorem 2.1} of \cite{pudlak:cuts}.}
\end{remark}



We can now complete the proof of Theorem \ref{thm:main}. 
Assume that $\psi$ is a second-order sentence true of $\tuple{\bbn, S, 0}$ 
provably in $\wklstar$. Let $\m = \tuple{M, \X}$ be a model of $\rcastar + \neg \mathrm{I}\Sigma^0_1$. 
Assume for the sake of contradiction that according to $\m$, the universe of any structure satisfying $\psi$ can be bijectively mapped onto $\bbn$.

Let $J$ be the proper cut in $M$ guaranteed to exist by the relativized version of Lemma \ref{lem:cuts}. 
\change{Note that according to $\m$, there is no bijection between $A_J$ and $\bbn$. Otherwise,
for every $y \in M$ the preimage of $\{0,\ldots,y-1\}$ under the bijection would be a finite subset of $A_J$ of 
cardinality exactly $y$, which would imply $|A_J|_\m = M$. But it is easy to verify that $|A_J|_\m = J$.

From our assumption on $\psi$ it follows that $\m$ believes $\bba(J) \models \neg \psi$.}

By Lemma \ref{lem:system-cut} and its proof, the mapping \change{$\tuple{x,y_x} \mapsto x$ induces an isomorphism between $\tuple{\bba(J), \X \cap \P(A_J)}$ and $\tuple{J, \X_J}$. Since $J$ is closed under addition and multiplication, we can define the operation $\oplus$ on $A_J$ by $\tuple{x,y_x} \oplus \tuple{z,y_z} = \tuple{x+z,y_{x+z}}$, and we can define $\otimes$ and $\preccurlyeq$ analogously. By $\Delta^0_0$ comprehension, $\oplus, \otimes, \preccurlyeq$ are all elements of $\X$. Write $\mathring \bba(J)$ for $\tuple{\bba(J), \oplus, \otimes, \preccurlyeq, \tuple{0,y_0}, \tuple{1,y_1}}$.}

Clearly, $A_J$ with the structure given by $\oplus, \otimes, \preccurlyeq$ satisfies the assumptions of Corollary \ref{cor:system-wkl}, which means that $\tuple{\mathring \bba(J), \X \cap \P(A_J)}$ is a model of $\wklstar$. 
We also claim that $\tuple{\mathring \bba(J), \X \cap \P(A_J)}$
believes $\bbn \models \neg \psi$. This is essentially an immediate consequence of the fact that $\m$ thinks $\bba(J) \models \neg \psi$, since the subsets of $A_J$ are exactly the same in \mbox{$\tuple{\mathring \bba(J), \X \cap \P(A_J)}$} as in $\m$. {There is one minor technical annoyance related to non-unary second-order quantifiers in $\psi$, as the integer pairing function in $\mathring \bba(J)$ does not coincide with that of $M$. The reason this matters is that the language of second-order arithmetic officially contains only unary set variables, so e.g.\ a binary relation is represented by a set of pairs, but a set of $M$-pairs of elements of $A_J$ might not even be a subset of $A_J$.}
Clearly, however, since the graph of the $\mathring \bba(J)$-pairing function is 
$\Delta^0_0(\exp)$-definable in $\m$, a given set of $M$-pairs of elements of $A_J$ belongs to $\X$ exactly if the corresponding  
set of $\mathring \bba$-pairs belongs to $\X \cap \P(A_J)$; and likewise for tuples of greater constant length.

Thus, our claim holds, and we have contradicted the assumption that $\psi$ is true of $\bbn$ provably in $\wklstar$. \hfill $\qed$ (Theorem \ref{thm:main}) 

\medskip

We point out the following corollary of the proof.

\begin{corollary} The following are equivalent over $\rcastar$:
\begin{itemize}
\item[(1)] $\neg \rca$.
\item[(2)] There exists $\m = \change{\tuple{M, \X}}$ satisfying $\wklstar$ such that $|M| \neq |\bbn|$.
\end{itemize}
\end{corollary}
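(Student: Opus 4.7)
The plan is to prove the two implications separately, with $(1) \Rightarrow (2)$ essentially recapitulating the construction from the proof of Theorem \ref{thm:main} and $(2) \Rightarrow (1)$ following by contrapositive from the standard enumeration-of-an-infinite-set lemma in $\rca$.

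For $(1) \Rightarrow (2)$, I would work in $\rcastar + \neg \rca$, fix a set $X \in \X$ witnessing the failure of $\Sigma^0_1$ induction, and invoke the relativized version of Lemma \ref{lem:cuts} to obtain a proper $\Sigma^0_1$-definable cut $J$ closed under $\exp$. Exactly as in the proof of Theorem \ref{thm:main}, I would then form $\bba(J)$, equip $A_J$ with operations $\oplus, \otimes, \preccurlyeq$ induced by the map $\tuple{x, y_x} \mapsto x$, and apply Corollary \ref{cor:system-wkl} to conclude that $\tuple{\mathring \bba(J), \X \cap \P(A_J)} \models \wklstar$. The key observation (already made in the proof of Theorem \ref{thm:main}) is that no set in $\X$ can code a bijection between $A_J$ and $\bbn$: such a bijection would yield finite subsets of $A_J$ of every cardinality $y \in M$, forcing $|A_J|_\m = M$ and contradicting $|A_J|_\m = J \subsetneq M$.

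For $(2) \Rightarrow (1)$, I would argue contrapositively: assuming $\rca$, I would show that the first-order universe $M$ of any coded model of $\wklstar$ is in bijection (in the ambient $\X$) with $\bbn$. Since $\wklstar$ proves that $0_M, 1_M, 2_M, \ldots$ are distinct, primitive recursion, available in $\rca$, supplies an injection $n \mapsto n_M$ of $\bbn$ into $M$, so $M$ is infinite in the ambient sense. The desired bijection is then the principal function enumerating $M$ in increasing order, defined by $\Sigma^0_1$ recursion in the standard way: $p_M(0) = \min M$, $p_M(n+1) = \min\{x \in M : x > p_M(n)\}$. Strict monotonicity gives injectivity, and surjectivity follows from a short $\Sigma^0_1$ induction using the finiteness of $M \cap [0,a]$ for each $a \in M$.

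I anticipate no serious obstacle. The first implication is very nearly a repackaging of material already developed in Sections \ref{section:almost-iso} and \ref{section:basic-case}, and the second rests on a textbook lemma about $\rca$. The one piece of mild bookkeeping is ensuring that ``there exists a coded model $\tuple{M,\X}$ of $\wklstar$'' is formalized uniformly enough that both directions of the equivalence can be expressed as honest second-order sentences.
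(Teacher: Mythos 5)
Your proposal is correct and follows essentially the same route as the paper: the paper proves $(1)\Rightarrow(2)$ by taking exactly the inner model $\tuple{\mathring \bba(J), \X \cap \P(A_J)}$ constructed in the proof of Theorem \ref{thm:main}, and proves $(2)\Rightarrow(1)$ by noting that $\rca$ proves all infinite sets have the same cardinality, which is the content of your principal-function argument. Your write-up merely spells out in more detail what the paper compresses into two sentences.
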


\begin{proof}
$\rca$ proves that all infinite sets have the same cardinality, which gives $(2) \Rightarrow (1)$. To prove $(1) \Rightarrow (2)$,
work in a model of $\rcastar + \neg \rca$ and take the inner model of $\wklstar$ provided by the proof of Theorem \ref{thm:main}.
\end{proof}

\begin{remark}
The type of argument described above can be employed to strengthen Theorem \ref{thm:main} in two ways.

Firstly, it is clear that $\tuple{\bbn, S, 0}$ could be replaced in the statement of Theorem \ref{thm:main} by, for instance,
$\tuple{\bbn, \le, +, \cdot, 0, 1}$. In other words, the extra structure provided by addition and multiplication does not
help in characterizing the natural numbers without $\mathrm{I}\Sigma^0_1$.

Secondly, for any fixed $n \in \omega$, the theories $\rcastar/\wklstar$ appearing in the statement could be extended (both simultaneously) by an axiom expressing the totality of $f_n$, the $n$-th function in the Grzegorczyk-Wainer hierarchy (e.g., the totality of $f_2$ is $\exp$, the totality of $f_3$ is $\supexp$). The proof remains essentially the same, except that the argument used to show Lemma \ref{lem:cuts} now splits into $n+2$ cases instead of four.

By compactness, $\rcastar/\wklstar$ could also be replaced in the statement of the theorem by $\rcastar + \mathrm{PRA}/\wklstar + \mathrm{PRA}$, where PRA is primitive recursive arithmetic.
\end{remark}

\section{Characterizations: exceptions}

In this section, \change{we give a precise statement of Theorem \ref{thm:main2}, 
and prove Theorems \ref{thm:main2} and \ref{thm:negwkl}}.  

\change{\begin{theorem2}
There exists a $\Delta_0$ formula $\Xi(x)$ defining a (polynomial-time recognizable) set 
of $\Sigma^1_1 \wedge \Pi^1_1$ sentences such that $\rcastar$ proves: ``for every $\bba$, $\bba$ is isomorphic to $\tuple{\bbn, S, 0}$ if and only if $\bba \models \xi$ for all $\xi$ such that $\Xi(\xi)$''.
\end{theorem2}}

\change{This is our formulation of ``there exists a set of second-order sentences which provably in $\rcastar$ categorically characterizes the natural numbers''. Note that a characterization by a fixed set of standard sentences is ruled out by Theorem \ref{thm:main} (and a routine compactness argument).} 

\begin{proof}[Proof of Theorem \ref{thm:main2}]
\change{We will abuse notation and write $\Xi$ for the set of sentences defined by the formula $\Xi(x)$.} Let $\Xi$ consist of the sentence $\xi$ from Lemma \ref{lem:almost-cat}
and the sentences 
\[\exists a_0\, \exists a_1\, \ldots \,\exists a_{x-1} \,\exists a_x \left[a_0 = c \wedge a_1 = f(a_0) \wedge \ldots \wedge a_x = f(a_{x-1})\right], \]
for every $x \in \bbn$. (Note that in a nonstandard model of $\rcastar$, the set $\Xi$ will contain sentences of nonstandard length.)

Provably in $\rcastar$, a structure $\bba$ satisfies all sentences in $\Xi$ exactly if it is a Peano system almost isomorphic to $\bbn$ such that for every $x \in \bbn$, $f^x(c)$ exists. Clearly then, $\bbn$ satisfies all sentences in $\Xi$. Conversely, if $\bba$ satisfies all sentences in $\Xi$, then $J(\bba) = \bbn$ and so $\bba$ is isomorphic to $\bbn$.
\end{proof}

\begin{theorem3}
There is a $\Sigma^1_2$ sentence which is a categorical characterization of $\tuple{\bbn, S, 0}$ provably in $\rcastar + \neg \wklrm$.
\end{theorem3}

Before proving the theorem, we verify that the theory it mentions is a $\Pi^1_2$-conservative extension of $\rcastar$. 

\begin{proposition}
The theory $\rcastar + \neg \wklrm$ is a $\Pi^1_2$-conservative extension of $\rcastar$.
\end{proposition}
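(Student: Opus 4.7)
The plan is the standard model-theoretic conservativity argument, namely to pass to a ``smallest'' $\rcastar$-submodel containing a prescribed parameter. Fix a $\Pi^1_2$ sentence $\pi = \forall X\, \exists Y\, \varphi(X,Y)$ with $\varphi$ arithmetical such that $\rcastar \nvdash \pi$; I need to produce a model of $\rcastar + \neg\wklrm + \neg\pi$. Start from a countable $\m = \tuple{M, \X} \models \rcastar + \neg\pi$ and pick a witness $X_0 \in \X$ for which $\forall Y\, \neg\varphi(X_0, Y)$ holds. Set
\[\X' := \{Z \in \X : Z \text{ is } \Delta^0_1\text{-definable in } M \text{ with parameter } X_0\}\]
and $\m' := \tuple{M, \X'}$. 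Since $X_0 \in \X'$ and $\X' \subseteq \X$, the sentence $\forall Y\, \neg\varphi(X_0, Y)$ still holds in $\m'$, so it suffices to check that $\m' \models \rcastar + \neg\wklrm$.

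That $\m' \models \rcastar$ is routine: $\Delta^0_1$ definitions compose, since any $\Delta^0_1$ subset of $M$ with second-order parameters from $\X'$ is in fact $\Delta^0_1(X_0)$ and hence lies in $\X'$; and $\mathrm{I}\Delta^0_0 + \exp$ is inherited directly from $\m$, since it concerns only formulas with parameters from $\X' \subseteq \X$.

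The substantive step is $\m' \not\models \wklrm$, for which I need an infinite binary tree in $\X'$ with no branch in $\X'$. Working inside $M$ relativized to $X_0$, the first-order part $\mathrm{B}\Sigma_1 + \exp$ of $\rcastar$ suffices to formalize partial computable functions via the $\Delta_0(\exp)$ Kleene predicate, and hence to build -- by the standard recursion-theorem diagonalization -- two disjoint $\Sigma^0_1(X_0)$ sets $A, B$ that are effectively inseparable in the sense that no $\Delta^0_1(X_0)$ set separates them. The associated Kleene tree
\[T = \{s \in 2^{<M} : \forall e < |s|\, ([e \in A]_{|s|} \to s(e) = 1) \wedge ([e \in B]_{|s|} \to s(e) = 0)\}\]
of finite approximations to separators is then $\Delta^0_1(X_0)$-definable (so it belongs to $\X'$), clearly infinite, and has no infinite branch in $\X'$, since any such branch would constitute a $\Delta^0_1(X_0)$ separator of $A$ and $B$.

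The main obstacle I foresee is ensuring that the construction of the effectively inseparable pair really goes through in the weak first-order setting of $\rcastar$ rather than under $\mathrm{I}\Sigma_1$, where the textbook argument is usually set. However, the diagonal/recursion-theorem step is essentially a bounded computation at each stage, involving only the $\Delta_0(\exp)$ step-predicate, so it should adapt to $\mathrm{B}\Sigma_1 + \exp$ without significant modification; this is the only point that deserves careful verification.
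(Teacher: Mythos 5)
Your proposal is correct and follows essentially the same route as the paper: given a model of $\rcastar$ with a witness $X_0$ to the failure of the $\Pi^1_2$ sentence, pass to the submodel whose second-order part consists of the $\Delta^0_1(X_0)$-definable sets, observe that $\rcastar$ and the universal arithmetical statement about $X_0$ persist, and kill $\wklrm$ via the standard Kleene separation tree for an inseparable pair. The paper compresses the last step into ``by a standard argument, there is a $\Delta_1(A)$-definable infinite binary tree without a $\Delta_1(A)$-definable branch,'' which is exactly the diagonalization you spell out (and which, as you note, only uses the $\Delta_0(\exp)$ step-predicate and so goes through over $\mathrm{B}\Sigma_1+\exp$).
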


\begin{proof}
Let $\exists X\, \forall Y \, \varphi(X,Y)$ be a $\Sigma^1_2$ sentence consistent with $\rcastar$. 
Take $\change{\tuple{M, \X}}$ and $A \in \X$ such that  $\change{\tuple{M, \X}} \models \rcastar + \forall Y \, \varphi(A,Y)$.
Let $\Delta_1(A)$-$\mathfrak{Def}$ stand for the collection of the $\Delta_1(A)$-definable subsets
of $M$. $\Delta_1(A)$-$\mathfrak{Def} \subseteq \X$, so obviously 
\change{$\langle M,\Delta_1(A)$-$\mathfrak{Def}\rangle  \models \rcastar + \forall Y\, \varphi(A, Y)$}.
Moreover, by a standard argument, there is a $\Delta_1(A)$-definable infinite binary tree
without a $\Delta_1(A)$-definable branch, so 
\change{$\langle M,\Delta_1(A)$-$\mathfrak{Def}\rangle \models \neg \wklrm$}.
\end{proof}

\begin{proof}[Proof of Theorem \ref{thm:negwkl}]
Work in $\rcastar + \neg \mathrm{WKL}$. The sentence $\psi$, our categorical characterization of $\bbn$, is very much like the the sentence $\xi$ described in the proof of Lemma \ref{lem:almost-cat}, which expressed almost isomorphism to $\bbn$. The one difference is that the $\Sigma^1_1$ conjunct of $\xi$: 
\begin{center}
there exists a discrete linear ordering $\preccurlyeq$ \\ for which $c$ is the least element and $f$ is the successor function,
\end{center}
is strengthened in $\psi$ to the $\Sigma^1_2$ sentence:
\begin{center}
there exist binary operations $\oplus, \otimes$ and a discrete linear ordering $\preccurlyeq$ such that \\ 
$\preccurlyeq$ has $c$ as the least element and $f$ as the successor function, \\
$\oplus$ and $\otimes$ satisfy the usual recursive definition of addition and multiplication, \\
and  such that $\mathrm{I}\Delta_0 + \exp + \neg \mathrm{WKL}$ holds.
\end{center}
$\mathrm{I}\Delta_0 + \exp$ is finitely axiomatizable \change{\cite{gd:mdrp}}, so there is no problem with expressing this as a single sentence. Note that $\psi$ is $\Sigma^1_2$.

Since $\neg \mathrm{WKL}$ holds, the usual $+$, $\cdot$ and ordering on $\bbn$ witness that $\bbn$ satisfies the new $\Sigma^1_2$ conjunct of $\psi$. Of course, $\bbn$ is a Peano system almost isomorphic to $\bbn$, and thus it satisfies $\psi$.

Now let $\bba$ be a structure satisfying $\psi$. Then $\bba$ is a Peano system almost isomorphic to $\bbn$, so we may consider $J(\bba)$. \change{As in the proof of Corollary \ref{cor:system-wkl}, we can show that the canonical isomorphism between $\bba$ and $J(\bba)$ has to map $\oplus, \otimes, \preccurlyeq$ witnessing the $\Sigma^1_2$ conjunct of $\psi$ to the usual $+, \cdot, \le$ restricted to $J$. This guarantees that $J(\bba)$ is closed under $\exp$, because the $\Sigma^1_2$ conjunct of $\psi$ explicitly contains $\mathrm{I}\Delta_0 + \exp$.} Moreover, Corollary \ref{cor:system-wkl} implies that $J(\bba)$ cannot be a proper cut, because otherwise $\bba$ with the additional structure given by $\oplus, \otimes, \preccurlyeq$ would have to satisfy WKL. So, $J(\bba) = \bbn$ and thus $\bba$ is isomorphic to $\bbn$.
\end{proof}

\section{Characterizations: exceptions are exotic}

To conclude the paper, we prove Theorem \ref{thm:from-tanaka} and some corollaries.

\begin{theorem4}
Let $T$ be an  extension of $\rcastar$ conservative for first-order $\forall \Delta_0(\Sigma_1)$ sentences. Let $\eta$ be a second-order sentence consistent with $\wklstar + \supexp$. Then it is not the case that $\eta$ is a categorical characterization of $\tuple{\bbn, S, 0}$ provably in $T$.
\end{theorem4}

\begin{proof}
Let $\m = \change{\tuple{M, \X}}$ be a countable recursively saturated model of $\wklstar + \supexp + \eta$.

Tanaka's self-embedding theorem \cite{tanaka:self} is stated for countable models of $\wkl$. \change{
However, a variant of the theorem is known to hold for $\wklstar$ as well:
\begin{Tanaka}[Wong-Yokoyama, unpublished]
If $\m = \change{\tuple{M, \X}}$ is a countable recursively saturated model of $\wklstar$ and $q\in M$, then there exists a proper cut $I$ in $M$ and an isomorphism $f:{\tuple{M, \X}}\to {\tuple{I, \X_{I}}}$ such that $f(q)=q$.
\end{Tanaka}
\noindent This can be proved by going through the original proof in \cite{tanaka:self} and verifying that all arguments involving $\Sigma^0_1$ induction can be replaced either by $\Delta^0_0(\exp)$ induction plus $\Sigma^0_1$ collection or by saturation arguments\footnote{\change{The one part of Tanaka's proof that does require $\Sigma^{0}_{1}$ induction is making $f$ fix (pointwise) an entire initial segment  rather than just the single element $q$. See \cite{enayat:new-tanaka}.}}. A refined version of the result was recently proved by a different method in \cite{ew:wklstar}. }

Thus, there is a \change{proper} cut $I$ in $M$ such that 
$\change{\tuple{M, \X}}$ and $\change{\tuple{I, \X_I}}$ are isomorphic. In particular, $\change{\tuple{I, \X_I}} \models \eta$.

Let $a \in M \setminus I$. Define the cut $K$ in $M$ to be 
\[\{y: \exists x \in I\, (y < \exp_{a+x}(2))\}.\] 
\change{Since $\exp_{2a}(2) \in M \setminus K$, the cut $K$ is proper and hence $\tuple{K, \X_K} \models \wklstar$. The set $I$ is still a proper cut in $K$, because $a \in K \setminus I$. Furthermore, $I$ is $\Sigma_1$-definable in $K$ by the formula $\exists x\, \exists y\, (y = \exp_{a+x}(2))$.}

$T$ is conservative over $\rcastar$ for first-order $\forall\Delta_0(\Sigma_1)$ sentences, so there is a model 
$\change{\tuple{L, \Y}} \models T$ such that $K \preccurlyeq_{\Delta_0(\Sigma_1)} L$.  We claim that in $\change{\tuple{L, \Y}}$ there is a Peano system $\bba$ satisfying $\eta$ but not isomorphic to $\bbn$. This will imply that $T$ does not prove $\eta$ to be a categorical characterization of $\bbn$. It remains to prove the claim.

We can assume that $\eta$ does not contain a second-order quantifier in the scope of a first-order quantifier. This is because we can always replace first-order quantification by quantification over singleton sets, at the cost of adding some new first-order quantifiers with none of the original quantifiers of $\eta$ in their scope. 

Note that $\change{\tuple{K, \X_K}}$ contains a proper $\Sigma_1$ definable cut, namely $I$, which satisfies $\eta$. Using the universal $\Sigma_1$ formula, we can express this fact by a first-order $\exists \Delta_0(\Sigma_1)$ sentence $\eta^{\mathrm{FO}}$. The sentence $\eta^{\mathrm{FO}}$ says the following:
\begin{center}
there exists a triple ``$\Sigma_1$ formula $\varphi(x,w)$,  parameter $p$, bound $b$'' such that \\
$b$ does not satisfy $\varphi(x,p)$, the set defined by $\varphi(x,p)$ below $b$ is a cut, \\
and this cut satisfies $\eta$.
\end{center}
To state the last part, replace the second-order quantifiers of $\eta$ by quantifiers over subsets of $\{0, \ldots, b-1\}$ (these are bounded first-order quantifiers) and replace the first-order quantifiers by first-order quantifiers relativized to elements below $b$ satisfying $\varphi(x,p)$. By our assumptions about the syntactical form of $\eta$, this ensures that $\eta^{\mathrm{FO}}$ is 
$\exists \Delta_0(\Sigma_1)$.

$L$ is a $\Delta_0(\Sigma_1)$-elementary extension of $K$, so $L$ also satisfies $\eta^{\mathrm{FO}}$. Therefore, $\change{\tuple{L, \Y}}$ also contains a proper $\Sigma_1$-definable cut satisfying $\eta$. \change{The Peano system corresponding to this cut via Lemma \ref{lem:system-cut} also satisfies $\eta$, but it cannot be isomorphic to 
$\bbn$ in $\tuple{L,\Y}$, because its internal cardinality is a proper cut in $L$.} The claim, and the theorem, is thus proved.
\end{proof}

\begin{remark}
The assumption that $\eta$ is consistent with $\wklstar + \supexp$ rather than just $\wklstar$ is only needed to ensure that there is a model of $\rcastar$ with a proper \emph{$\Sigma_1$-definable} cut satisfying $\eta$. The assumption can be replaced by consistency with $\wklstar$ extended by a much weaker first-order statement, but we were not able to make the proof work assuming only consistency with $\wklstar$. 
\end{remark}

One idea used in the proof of Theorem \ref{thm:from-tanaka} seems worth stating as a separate corollary.

\begin{corollary}\label{cor:fo-expressible}
Let $\eta$ be a second-order sentence. The statement ``there exists a Peano system $\bba$ almost isomorphic but not isomorphic to $\tuple{\bbn, S, 0}$ such that \mbox{$\bba \models \eta$}'' is $\Sigma^1_1$ over $\rcastar$.
\end{corollary}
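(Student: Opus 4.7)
The plan is to generalize the construction of the first-order sentence $\eta^{\mathrm{FO}}$ used in the proof of Theorem \ref{thm:from-tanaka}. By Lemma \ref{lem:system-cut}, working in $\rcastar$ inside a model $\m = \tuple{M, \X}$, the existence of a Peano system $\bba \in \X$ that is almost isomorphic but not isomorphic to $\tuple{\bbn, S, 0}$ and satisfies $\eta$ is equivalent to the existence of a \emph{proper} $\Sigma^0_1$-definable cut $J$ in $M$ such that $\tuple{J, \X_J} \models \eta^*$, where $\eta^*$ is $\eta$ with $f, c$ renamed to $S, 0$.

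We would then encode this as a first-order $\exists \Delta_0(\Sigma_1)$ sentence (in particular a $\Sigma^1_1$ one) by means of the universal $\Sigma_1$ formula $\mathrm{Sat}_{\Sigma_1}$. The outermost existential quantifier ranges over triples (G\"odel number $e$ of a $\Sigma_1$ formula, parameter $p$, bound $b$), accompanied by $\Delta_0(\Sigma_1)$ conditions ensuring that $J := \{x < b : \mathrm{Sat}_{\Sigma_1}(e, p, x)\}$ is a proper cut below $b$. To express $\tuple{J, \X_J} \models \eta^*$, we first rewrite $\eta^*$ so that no second-order quantifier appears in the scope of a first-order one (by the singleton-set trick used in the proof of Theorem \ref{thm:from-tanaka}). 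Each second-order quantifier of the rewritten sentence then becomes a bounded first-order quantifier: since $J \subseteq \{0, \ldots, b-1\}$ and $\rcastar$ proves totality of exponentiation, every relation on $J$ of any fixed arity is $M$-finite and coded by a single element of $M$ below an $\exp$-bounded value, while all such codes correspond to elements of $\X_J$ by $\Delta^0_0$-comprehension. The first-order quantifiers of $\eta^*$ are relativized to $J$ as bounded quantifiers $\forall x < b$ or $\exists x < b$ guarded by $\mathrm{Sat}_{\Sigma_1}(e, p, x)$.

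The main obstacle is the routine but tedious bookkeeping involved in translating atomic subformulas of $\eta^*$ --- involving $S$, $0$, and the second-order variables coded as sets of tuples --- into the chosen coding scheme for relations on the bounded cut $J$. Once this is carried out, the equivalence between the original second-order statement and the resulting $\exists \Delta_0(\Sigma_1)$ sentence is provable in $\rcastar$ via the proofs of Lemmas \ref{lem:almost-cat} and \ref{lem:system-cut}.
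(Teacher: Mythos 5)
Your overall strategy --- reduce to the existence of a proper definable cut satisfying $\eta$ via Lemma \ref{lem:system-cut}, then express this using the universal $\Sigma_1$ formula and the coding of relations on a bounded cut by numbers --- is the paper's strategy. But there is a genuine gap in the reduction step: you let the outermost existential quantifier range over triples consisting of a G\"odel number $e$ of a $\Sigma_1$ formula, a \emph{first-order} parameter $p$, and a bound $b$. The cut associated to a Peano system $\bba$ by Lemma \ref{lem:system-cut} is $J(\bba)$, defined by the formula $\exists a \in A\, f^x(c) = a$, which has the \emph{sets} $A$ and $f$ as parameters; it is $\Sigma^0_1$-definable, not $\Sigma_1$-definable in the paper's (purely first-order) sense. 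These two notions genuinely diverge: there are models $\tuple{M,\X} \models \rcastar$ in which $M$ satisfies $\mathrm{I}\Sigma_1$ (indeed full first-order induction), so that no proper cut is $\Sigma_1$-definable from first-order parameters alone, while $\Sigma^0_1$ induction with set parameters fails and hence proper $\Sigma^0_1$-definable cuts --- and with them Peano systems almost isomorphic but not isomorphic to $\bbn$ --- do exist. In such a model your $\exists\Delta_0(\Sigma_1)$ sentence is false while the statement it is supposed to express is true, so the claimed equivalence fails.

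The repair is exactly the point of the corollary's statement: prepend an existential second-order quantifier over the set parameter of the $\Sigma^0_1$ formula defining the cut (using a universal $\Sigma_1(X)$ formula with a free set variable $X$). This is why the result asserts only that the statement is $\Sigma^1_1$, rather than first-order $\exists\Delta_0(\Sigma_1)$; your parenthetical ``in particular a $\Sigma^1_1$ one'' inadvertently signals that you are proving something strictly stronger than what is true. The rest of your construction --- the singleton-set normalization of $\eta$, the replacement of second-order quantifiers by bounded quantifiers over number codes of subsets of $\{0,\ldots,b-1\}$ (which, restricted to the cut, exhaust $\X_J$), and the relativization of first-order quantifiers --- is correct and matches the construction of $\eta^{\mathrm{FO}}$ in the proof of Theorem \ref{thm:from-tanaka}. (In that theorem the purely first-order version is legitimate only because the particular cut $I$ constructed there happens to be definable from the first-order parameter $a$ alone.)
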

\begin{proof}
By Lemma \ref{lem:system-cut}, a Peano system satisfying $\eta$ and almost isomorphic but not isomorphic to $\bbn$ exists exactly if there is a proper $\Sigma^0_1$-definable cut satisfying $\eta$. This can be expressed by a sentence identical to the first-order sentence $\eta^{\mathrm{FO}}$ from the proof of Theorem \ref{thm:from-tanaka} except for an additional  existential second-order quantifier to account for the possible set parameters in the formula defining the cut.
\end{proof}

Theorem \ref{thm:from-tanaka} also has the consequence that if we restrict our attention to $\Pi^1_1$-conservative extensions of $\rcastar$, then the characterization from Theorem \ref{thm:negwkl} is not only the ``truest possible'', but also the ``simplest possible'' provably categorical characterization of $\bbn$.

\begin{corollary} Let $T$ be a $\Pi^1_1$-conservative extension of $\rcastar$. Assume that the second-order sentence $\eta$ is a categorical characterization of $\tuple{\bbn, S, 0}$ provably in $T$. Then
\begin{itemize}
\item[(a)] $\eta$ is not $\Pi^1_2$, 
\item[(b)] $T$ is not $\Pi^1_2$-axiomatizable.
\end{itemize}
\end{corollary}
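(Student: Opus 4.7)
Both parts will be reduced to a contradiction with Theorem~\ref{thm:from-tanaka}. The preliminary observation is that $\Pi^1_1$-conservativity of $T$ over $\rcastar$ implies the first-order $\forall\Delta_0(\Sigma_1)$-conservativity hypothesis of that theorem, since every $\forall\Delta_0(\Sigma_1)$ sentence lies in $\Pi_2$ and hence in $\Pi^1_1$. Therefore, under the standing hypothesis that $\eta$ is provably categorical in $T$, Theorem~\ref{thm:from-tanaka} yields that $\eta$ is inconsistent with $\wklstar + \supexp$, i.e.\ $\wklstar + \supexp \vdash \bbn \models \neg\eta$. In each part, I will derive the required contradiction by constructing a model of $\wklstar + \supexp$ in which $\bbn \models \eta$ also holds.

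For part (a), assuming $\eta$ is $\Pi^1_2$, consider the subtheory $T_0 = \rcastar + (\bbn \models \eta) \subseteq T$, which is axiomatized over $\rcastar$ by a single $\Pi^1_2$ sentence. Being a subtheory of $T$, it inherits $\Pi^1_1$-conservativity over $\rcastar$. Starting from a countable model of $T_0 + \supexp$ (whose existence must be checked separately), the plan is to build, by stages, a chain of models of $T_0 + \supexp$ whose union is a model of $T_0 + \wklstar + \supexp$: at each stage one adds an infinite branch for some infinite tree (to eventually force $\wklrm$) together with all witnesses $Y$ needed to preserve the axiom $\bbn \models \eta = \forall X\,\exists Y\,\eta_0(X,Y)$ against any newly-introduced parameter $X$ (which may in turn spawn further requirements, to be handled at later stages). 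The resulting union is the required model of $\wklstar + \supexp + (\bbn \models \eta)$, contradicting the consequence of Theorem~\ref{thm:from-tanaka}.

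For part (b), assume $T = \rcastar + \Gamma$ with $\Gamma \subseteq \Pi^1_2$. By compactness the $T$-proof of categoricity of $\eta$ uses only finitely many sentences of $\Gamma$, whose conjunction $\tau$ is itself a $\Pi^1_2$ sentence. Then $\rcastar + \tau \subseteq T$ is $\Pi^1_1$-conservative over $\rcastar$ and still proves the categoricity of $\eta$. The same chain construction as in (a), with $\tau$ now playing the role of the $\Pi^1_2$ axiom to be preserved, produces a model of $\rcastar + \tau + \wklstar + \supexp$; since this model satisfies $T$, it in particular satisfies $\bbn \models \eta$, and we again contradict Theorem~\ref{thm:from-tanaka}.

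The main obstacle in both parts is the chain construction, specifically the need at each stage to simultaneously handle two kinds of requirements: an instance of $\wklrm$ (unproblematic in isolation, by a standard branch-addition) and preservation of the relevant $\Pi^1_2$ axiom ($\bbn \models \eta$ in (a), $\tau$ in (b)), which demands supplying new witnesses $Y$ for every newly-introduced set parameter $X$, where the added witnesses themselves may be new parameters. One must organise the stages so the construction is coherent and every requirement is eventually fulfilled in the union. A subsidiary technical matter is to verify that the starting theory is consistent with $\supexp$ (so that the chain has a first member), which I expect to follow from $\Pi^1_1$-conservativity over $\rcastar$ together with a short analysis showing that if $T_0 + \supexp$ (respectively $\rcastar + \tau + \supexp$) were inconsistent, then the argument for the opposite direction already yields a contradiction.
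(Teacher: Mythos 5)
Your overall reduction is exactly the one the paper uses: observe that $\Pi^1_1$-conservativity implies $\forall\Delta_0(\Sigma_1)$-conservativity, apply Theorem \ref{thm:from-tanaka} to conclude that $\eta$ must be inconsistent with $\wklstar + \supexp$, and then derive a contradiction by exhibiting a model of $\wklstar + \supexp$ satisfying $\bbn \models \eta$. The passage from $\Pi^1_1$-conservativity of $T$ to $\Pi^1_1$-conservativity of $\rcastar + \eta$ in part (a), and the compactness reduction to a single $\Pi^1_2$ sentence $\tau$ in part (b), are both fine; so is your expectation that consistency with $\supexp$ comes for free, since $\neg\supexp$ is arithmetic, hence $\Pi^1_1$, and is not provable in $\rcastar$.

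The genuine gap is the step you yourself flag as ``the main obstacle'': that a $\Pi^1_2$-axiomatizable theory which is $\Pi^1_1$-conservative over $\rcastar$ remains consistent (indeed $\Pi^1_1$-conservative) after adding $\wklstar$. The paper does not prove this; it invokes it as a known amalgamation result from \cite{Y2009}. Your proposed chain construction, as described, does not yet establish it: after you adjoin a branch $B$ of an infinite tree, there is in general no guarantee that a witness $Y$ with $\eta_0(B,Y)$ can be added at all --- the sentence $\forall X\,\exists Y\,\eta_0(X,Y)$ may fail in every expansion of the new model with the same first-order part. Ruling this out is precisely where the $\Pi^1_1$-conservativity hypothesis must do its work, and it must be deployed at \emph{every} stage of the chain (via a re-expansion lemma for countable recursively saturated models, or a Harrington-style genericity argument adapted to the $\Pi^1_2$ axiom), not merely, as in your sketch, to secure the first member of the chain. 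A smaller point in the same direction: branches must be added generically even to preserve $\mathrm{B}\Sigma^0_1$. Since this amalgamation theorem carries essentially all of the content of the corollary beyond Theorem \ref{thm:from-tanaka}, leaving it as a plan leaves the proof incomplete; either cite \cite{Y2009} as the paper does, or supply the stage-by-stage re-expansion argument in full.
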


\begin{proof} We first prove (b). Assume that $T$ is $\Pi^1_2$-axiomatizable and $\Pi^1_1$-conservative over $\rcastar$.
As observed in \cite{Y2009}, this means that $T + \wklstar$ is $\Pi^1_1$-conservative over $\rcastar$, so $T$ is consistent with $\wklstar + \supexp$. Hence, Theorem \ref{thm:from-tanaka}  implies that there can be no provably categorical characterization of $\bbn$ in $T$.

Turning now to part (a), assume that $\eta$ is $\Pi^1_2$. Since $T$ is $\Pi^1_1$-conservative over $\rcastar$ and proves that $\bbn \models \eta$, then $\rcastar + \eta$ must also be $\Pi^1_1$-conservative over $\rcastar$.  But then, by a similar argument as above, $\eta$ is consistent with $\wklstar + \supexp$, which contradicts Theorem \ref{thm:from-tanaka}.   
\end{proof}


\begin{ack}
We are grateful to Stephen G.\ Simpson for useful \change{remarks and to an anonymous referee for comments which helped improve the presentation}.
\end{ack}

\bibliography{categorical}

\end{document}